\newtheorem{theorem}{Theorem}
\newtheorem{lemma}[theorem]{Lemma}
\newtheorem{proposition}[theorem]{Proposition}
\newtheorem{remark}[theorem]{Remark}
\newtheorem{corollary}[theorem]{Corollary}
\newtheorem{definition}[theorem]{Definition}
\begin{document}
\bibliographystyle{plain}

\title{Learning Patterns for Detection with Multiscale Scan Statistics}
\author{
James Sharpnack \\
{\tt  jsharpna@ucdavis.edu} \\
Department of Statistics, UC Davis\\
}

\date{}%\today}
\maketitle

\begin{abstract}
This paper addresses detecting anomalous patterns in images, time-series, and tensor data when the location and scale of the pattern and the pattern itself is unknown a priori.
The multiscale scan statistic convolves the proposed pattern with the image at various scales and returns the maximum of the resulting tensor.
Scale corrected multiscale scan statistics apply different standardizations at each scale, and the limiting distribution under the null hypothesis---that the data is only noise---is known for smooth patterns. 
We consider the problem of simultaneously learning and detecting the anomalous pattern from a dictionary of smooth patterns and a database of many tensors.
To this end, we show that the multiscale scan statistic is a subexponential random variable, and prove a chaining lemma for standardized suprema, which may be of independent interest.
Then by averaging the statistics over the database of tensors we can learn the pattern and obtain Bernstein-type error bounds.
We will also provide a construction of an $\epsilon$-net of the location and scale parameters, providing a computationally tractable approximation with similar error bounds.
\end{abstract}

\section{Introduction}

Detection is the statistical task of determining if there is some structured signal within noisy data.
If classification answers the question, ``what am I seeing?'', detection answers the question, ``do I see anything at all?''.
In a sensor network (see \cite{culler2004overview}), one is often interested in the dual problems of noticing an anomaly (detection) and then determining its location and extent (classification).
Sensor networks are deployed in natural environments for contaminant detection, \cite{yang2009real, white2008contaminant}, real-time surveillance (\cite{caron2002method}), radiation monitoring (\cite{brennan2004radiation}), and fire detection (\cite{pozo1997fire}).  In medical imaging, the critical task is often to test the existence of an anomaly (\cite{moon2002brain, james2001breast}).
Quick detection of outbreaks of pathogens, \cite{heffernan2004ssp,rotz2004advances}, can lead to early intervention.
Yet, given that this is such a fundamental task in many applications, the development of detection methodology lags behind the sophisticated tools for classification.

In images, time series, and tensors, it is natural to assume that there is a structured signal, such as blob-like objects, but we do not know its location or scale.
In Figure \ref{fig:scan}, we can see a chemical plume from a multispectral image where each pixel value is lighter if a certain spectral signature is present (see \cite{manolakis2002detection}).
We did not know a-priori where this chemical would appear, nor how large it would be.
A natural approach to detecting signals of this type would be to center a regular shape, or pattern, such as a square or an ellipse around every pixel and detect anomalously large quantities of the spectral signature.
In this work, we will address the dual problem of detecting anomolous patterns in tensors, and learning regularly occuring patterns in a tensor dataset from a dictionary. 
Let's begin by outlining multiscale scan statistics and the recent advances in adaptive detection.

\begin{figure}[ht]
    \centering
    \includegraphics[width=.45\textwidth]{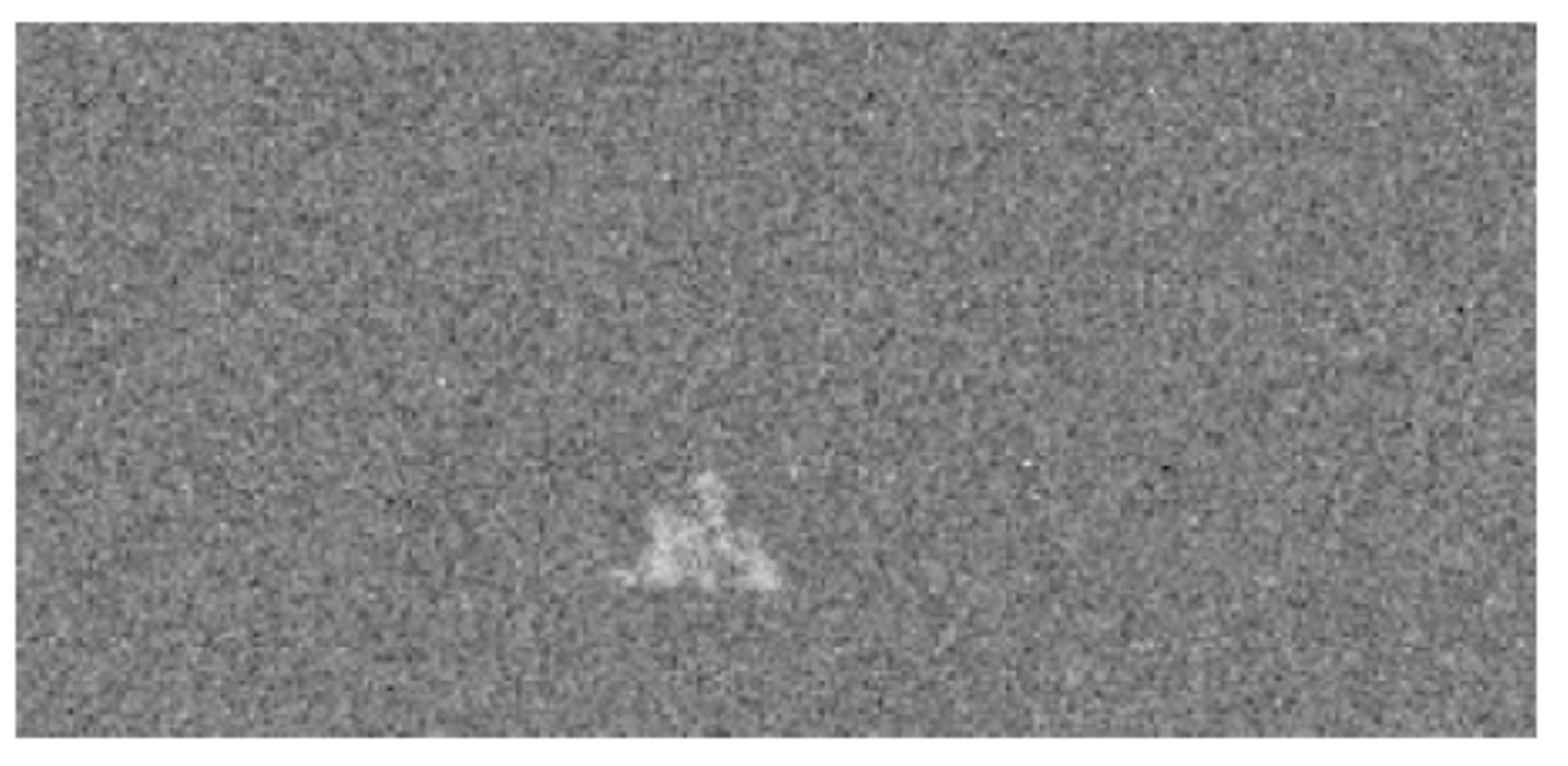}\hspace{.05\textwidth}
    \includegraphics[width=.45\textwidth]{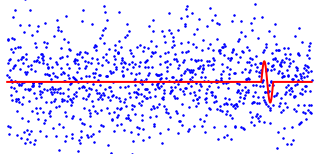}
    \caption{An image with an anomalous region of contaminant (left) and a simulated time series with an embedded sinusoidal signal with values on the $y$-axis (right).}
    \label{fig:scan}
\end{figure}

The aptly named scan statistic is a test statistic that is based on scanning the image or time series ($X$) for a pattern ($f$) that may be centered anywhere in the domain.
For each location ($t$), one can form a likelihood ratio, and test if this statistic exceeds some predetermined threshold (thus, it is a generalized likelihood ratio test).
Scan statistics are widely used in spatial detection applications; see \cite{glaz2001scan} for a thorough introduction to the topic.
They are commonly used to detect patterns in point clouds, \cite{Naus65, neill2012fast}, a closely related problem to our own.
Because we often do not know the size as well as the location of the anomaly, we will scan all locations ($t$) and scales ($h$) simultaneously---this is called the {\em multiscale scan statistic}. 
Hence, the multiscale scan will translate various scaled versions of a pattern, such as rectangles with many options of side lengths or circles of varying radii.
In Neyman-Pearson testing, we attempt to control the probability of false rejection under a null hypothesis, when our data only consists of noise.
To this end, we either must approximate the distribution of the scan statistic asymptotically, or set the detection threshold by simulation or resampling.  
\cite{siegmund1995testing} provided a weak limit for the scan consisting of all intervals in 1 dimension (other approximations can be found in \cite{naus2004multiple} and \cite{pozdnyakov2005martingale}).
In 2 dimensions, \cite{glaz2004multiple, haiman2006estimation, wang2014variable, kabluchko2011extremes}, provided approximations of the null distribution for the multiscale scan statistic.
\cite{MGD, cluster} analyzed scan statistics for blob-like patterns and determined thresholds for detectability in this context. 

It was observed that if one naively tests all scales in the multiscale scan at the same threshold, then the rejection events will be dominated by the finest scale (\cite{chan2013detection}).
It was shown in \cite{dumbgen2001multiscale} that by separately standardizing the 1-dimensional scan at each scale one can detect with a signal-to-noise ratio that adapts to the scale of the anomalous signal.
This scale correction for multiscale scan statistics with rectangular shapes was further studied in \cite{MR2604703, sharpnack2016exact}.
\cite{proksch2016multiscale} showed that similar results can be achieved for a large class of patterns that satisfy an average H\"older continuity assumption.

% In \cite{sharpnack2016exact}, the authors demonstrated that an $\epsilon$-net for the rectangular scan could be formed by a hierarchy of downsampling and convolutional operations, and were able to used fast GPU implementations.
% Convolutional neural networks, \cite{lecun1995convolutional}, consist of hierarchies of convolutional layers composed with non-linear operations.
% Despite the fact that the resulting optimization programs are non-convex, methods like stochastic gradient descent can allow us to learn features for classification.
% Deep learning toolkits provide a promising avenue for developing fast scan statistics, but the analogy to the convolutional neural network also suggests that we might learn representations in the detection setting. 
% %More recently, deep learning has emerged as an essential tool in the machine learning toolbox, with customized implementations on GPUs (\cite{krizhevsky2012imagenet}).
% Because the non-linear max operation is integral to scan statistics and detection in general, these developments beg the question: is it possible to learn the detectable patterns from a large database of images?

In traditional detection applications, a tensor is scanned for a known function or for specific blob-like patterns.
In a database of many tensors with a repeated anomalous pattern in a database then it may be possible to simultaneously test for all of the patterns in a class $\mathcal F$.
The catch is that each time-series or image---more generally, tensor---will have the anomalous pattern in a different location and at a different scale.
For example, we have many time series, all of which have some embedded smooth signal (such as the sinusoid in Figure \ref{fig:scan}) where the sinusoid begins at various time points, and has different periodicities.
Without knowing a priori that we are looking for a sinusoid this problem seems intractable, but given that we know enough about the signal (such as it comes from a finite dictionary of functions), then with enough data we may both learn and detect the pattern within very noisy data.

Previous works have also studied the detection of any pattern from a large class of patterns in a single noisy tensor and other more complicated detection scenarios.
The main difference between these works and our own is in the order of operations, namely our methodology proposes a pattern $f$, then tests if this patterns occurs in the tensors $X^i$ for all $i$ in a multiscale fashion, and repeats this for all patterns in a class; most methods will consider a tensor $X^i$ (or a subset of tensors) then tests for any pattern within a class.
\cite{cluster} studied the detection of Lipschitz deformations of the Euclidean ball in a single image.
One can also consider a time series of images to form a single order 3 tensor; this is studied in \cite{kifer2004detecting}, where they detect temporal changepoints of any patterns within a VC class.
Detecting intervals in multiple time-series has also been studied in \cite{chan2015optimal}, but this setting differs from our own because it assumes that only a sparse subset of the time series will contain an anomalous interval.

\subsection{Contributions}

We will introduce the multiscale scan statistic, with scale correction, in Section \ref{sec:method}, and describe an $\epsilon$-net construction using repeated dilation and convolution operations.
We begin our theoretical analysis, in Section \ref{sec:chaining}, by proving a chaining result which shows that the standardized supremum of certain subGaussian random fields is a subexponential random variable.
This is then used, in Section \ref{sec:main} to provide a finite sample bound on the scale corrected multiscale scan statistic (until now it was only known that it was A.S. bounded).
We conclude by demonstrating that our $\epsilon$-net construction is indeed correct and a type 2 error control in Section \ref{sec:eps}.

\section{Method and Model}
\label{sec:method}

\subsection{Continuous scan statistic}

Let's begin with the basic scan statistic over an image.
For an image, which can be represented by a matrix, $\{Y_{k,l} :k,l=-L,\ldots,L\}$ we can convolve a pattern $\{P_{k,l}:k,l=-H,\ldots,H\}$ with the image,
$$
(P \star Y)_{k,l} = \sum_{k',l' = -H}^{H} Y_{k - k',l - l'} P_{k', l'}, \quad k,l = -L + H, \ldots, L-H.
$$
Then the simple scan statistic is $\max_{k,l} (P \star Y)_{k,l}$ (one can take the absolute value by also considering $P \gets -P$).
In the case that $P_{k,l} = 1/(2H)$ then this scans a square activation pattern over the image.
A common assumption in this problem, is that under the null hypothesis, $Y_{k,l}$ are independent standard subGaussian random variables.
For an arbitrary, pattern matrix $P$ we would like to scale both dimensions, so that $H \gets H_j'$ in dimension $j$ for some $1 \le H_j' \le L$ (e.g.~ stretching the square to form a rectangle).
For general patterns, this can be difficult, and one approach is to scale the dimensions for a continuous function, $f$, and then rasterize it.
This analysis can become very cumbersome and not terribly enlightening, so we will approximate the pixels with a continuous domain and the image with a random field.

In order to implement our scan, we begin by proposing a given pattern, which is a function $f \in \mathcal F$ over the $d$-dimensional domain.
For images, $d=2$, and for time-series, $d=1$, but we will only assume that the $d$ is fixed in our asymptotics.
We assume that for every $f \in \mathcal F$, $\| f \|_{L_2} = 1$ and is supported over $\Omega := [-1,1]^d$.
We will further assume that $f$ has continuous gradient ($\mathcal F \subset C^1(\Omega)$).
For a given field, $X^i$, over $\Omega_L := [-L,L]^d$, we propose a scale parameter, 
$$
h \in \mathcal H := \times_j [1, L),
$$
such that $h_j$ is the scale parameter for dimension $j$.
(Throughout $j=1,\ldots d$ and $j$ will always indicate dimension.)
Given an $h \in \mathcal H$, we select 
$$
t \in \mathcal T_h:= \times_j [- (L - h_j), L-h_j]
$$ 
respectively, and test if the pattern $f$ centered at $t$ and scaled by $h$ is hidden within image $X^i$.
This is accomplished by convolving the field, $X^i$, with the scaled function $f_h: = h_\bullet^{-1/2} f(. / h)$,
%\begin{equation}
$$
(f_h \star {\rm d} X^i)(t) = \int_{\Omega_L} f_h(\tau) {\rm d} X^i(t - \tau) = \int_{\Omega} \frac{1}{\sqrt{h_\bullet}}f(\tau) {\rm d} X^i(t - h \tau),
$$
%\end{equation}
where $h_\bullet = \prod_j h_j$ and vector operations are such that $t/h$ are performed elementwise.
(Throughout, $f_h(t) = 0$ if $t$ is outside of the domain of $f_h$.)
The multiscale scan statistic takes the form,
\begin{equation}
\label{eq:MSS}
s(X^i; f) := \max_{h \in \mathcal H} v_h \left( \max_{t \in \mathcal T_h} (f_h \star {\rm d} X^i)(t) - v_h \right).
\end{equation}
where in this work we will take $v_h = \sqrt{2 \sum_j \log (L/h_j)}$.
Hence, when the scale is coarse ($h_j$ is large) then $v_h$ is smaller.
At the finest scale, $v_h$ is large, the maximum at this scale concentrates about $v_h$ with a rate parameter $v_h$ (meaning that this concentrates more tightly than the pixel noise variance of $1$).

Throughout, we will denote the set of all valid scale and location parameters as
$$
\mathcal D := \{(t,h) : h \in \mathcal H, t \in \mathcal T_h \}.
$$
We will assume that the fields, $X^i$, are independent and have additive white noise terms, $W^i$.
Our noise model is such that $\int f {\rm d} W^i$ is a zero mean subGaussian random field indexed by $f$ with variance $\| f\|_{L_2}^2$ (see Section \ref{sec:chaining} for a definition).
We will test if the field $X^i$ is just noise (null hypothesis) or the noise is added to the function, $f$, which is translated by $t^i$ and scaled by $h^i$,
\begin{align*}
    H_0&: {\rm d} X^i(\tau) = {\rm d} W^i(\tau), i=1,\ldots,n \\
    H_1&: {\rm d} X^i(\tau) = \mu f_{h^i}(t^i - \tau) {\rm d} \tau + {\rm d} W^i(\tau) \\
    &\quad {\rm ~for~some~} f \in \mathcal F, {\rm~and~} (t^i, h^i) \in \mathcal D, i =1,\ldots,n.
\end{align*}
Critically, the location and scale parameters $t^i, h^i$, differ for each image.  
Notice that $\|f_h\|_{L_2} = 1$, so under the null hypothesis, the convolution $(f_h \star {\rm d} X^i)(t)$ has mean $0$ and variance $1$.
Because each statistic, $s(X^i; f)$, is independent and standardized by $v_h$, then we can average these in order to increase the power of our final test statistic.
To this end, let's define the pattern adapted multiscale scan statistic (PAMSS), 
\begin{equation}
S_n(X; \mathcal F) := \max_{f \in \mathcal F} \frac{1}{\sqrt n} \sum_{i=1}^n s_n(X^i; f).
\end{equation}
We will show that $s_n(X^i;f)$ is subexponential, and so we can obtain probabilistic bounds on $S_n$ with the subexponential Bernstein-type inequality.

\subsection{Multiscale $\epsilon$-net construction}

In practice, data is discrete, and the scan statistic must be computed over a finite set of scales and locations.
Suppose that for sample, $X^i$, we have a draw from the null hypothesis, so that $\mathbb E {\rm d} X^i(\tau) = \mu f_{h^i} (t^i - \tau)$.
Instead of scanning over all $(t,h) \in \mathcal D$, we use a finite subset, $\mathcal D_{\rm net} \subset \mathcal D$.
Let $t',h'$ be values in $\mathcal D_{\rm net}$ that are close to $t^i, h^i$ in some sense.
Then the expectation of the scan at this approximating location and scale is
$$
\mathbb E (f_{h'} \star {\rm d}X^i)(t') = \mu \int f_{h'}(\tau) f_{h^i} (\tau + t^i - t') {\rm d} \tau.
$$
If we define the shift operator $(S_t f)(\tau) := f(\tau + t)$ then we see that this expectation is $\mu \langle S_{t'} f_{h'} ,S_{t^i} f_{h^i} \rangle_{L_2}$.
Incidentally, this metric appears when we consider the variation of our scan statistic under the null hypothesis,
$$
\nu_f((t,h),(t',h')) := \left\| S_{t} f_h - S_{t'} f_{h'} \right\|_{L_2} = \left(\mathbb V \left( (f_h \star {\rm d} W)(t) - (f_{h'} \star {\rm d} W)(t') \right)  \right)^{\frac 12},
$$
where ${\rm d}W$ satisfies our noise assumptions (such as the standard multivariate Brownian motion).
This fact will be useful for when we provide type 1 error control for our scan statistic.
With this metric we will say that a finite subset $\mathcal D_{\rm net} \subset \mathcal D$ is an {\em $\epsilon$-net} if for any $f\in \mathcal F$ and any point $(t,h) \in \mathcal D$ there exist points $(t',h') \in \mathcal D_{\rm net}$ such that $\nu_f((t,h),(t',h')) \le \epsilon$.

The sensitivity of $\nu_f$ to small changes in $t,h$ will depend on the smoothness of $f$.
If $f$ has large gradients then a small shift, $S_t$, can misalign the function with the unshifted version.
To this end, we will consider two different notions of smoothness for the functions in our dictionary, $\mathcal F$.
Define the isotropic total variation (recall that the functions have continuous gradients),
$$
\| f \|_{\rm TV} := \int_\Omega \| \nabla f(u) \|_2 {\rm d} u.
%\sup \left\{ \int_\Omega f {\rm div} g ; g \in C_c^1(\Omega, \mathbb R^d), \| g\|_\infty \le 1 \right\}
$$
Then we may assume that all of the function are of bounded variation,
\begin{equation}
    \label{eq:TVC}
    \exists \gamma_1 > 0 {\rm ~s.t.~} \forall f \in \mathcal F, \quad \| f \|_{\rm TV} \le \gamma_1. \tag{TVC}
\end{equation}
The bounded variation assumption is consistent with the assumption in \cite{dumbgen2001multiscale}, although we generalize this work to the multidimensional case.
Another notion of smoothness is the average H\"older condition of \cite{proksch2016multiscale}.
Define the H\"older functional,
$$
A_{t,s}(f) := \int_{\Omega_L} \left|f(t-z) - f(s-z) \right|^2 {\rm d} z.
$$
Then an alternative to the bounded variation assumption is that
\begin{equation}
    \label{eq:AHC}
    \exists 0 < \gamma_2 \le 1 {\rm ~s.t.~} \forall f \in \mathcal F,\quad A_{t,s}(f) \le c_A \| t-s\|_2^{2 \gamma_2} \tag{AHC}
\end{equation}
where $c_A$ is some constant.  Throughout the paper we will refer to both conditions, and denote $\gamma$ as either $\gamma_1$ or $\gamma_2$ depending on context.

Given that our functions $\mathcal F$ satisfy some smoothness assumptions, we can specify an $\epsilon$-net construction.
For a given $\beta > 1$ define $\ell_{\max} = \lfloor \log_\beta L \rfloor$.
We will begin with a subset of scales,
$$
\mathcal H_\beta = \{ \beta^\ell : \ell = \{0,\ldots, \ell_{\max}\}^d \}.
$$
where $\beta > 1$ is some parameter.
At a given scale, we will consider a grid of evenly spaced locations, where the distances between grid points increases with the scale, $h$,
$$
\mathcal T_{\alpha,h} := \left( \times_j (\alpha h_j \cdot \mathbb Z) \right) \cap \mathcal T_h.
%= \times_j \{ -(\ell_{\max} - \alpha) h_j, -(\ell_{\max} - 2 \alpha) h_j, \ldots, -h_j, 0, h_j,\ldots, (\ell_{\max} - 2\alpha) h_j, (\ell_{\max} - \alpha) h_j \}.
$$
The spacing, $\alpha > 0$, is a tuning parameter as well in this construction.
With $\beta, \alpha$ specified, we can consider the $\epsilon$-net to be
$$
\mathcal D_{\beta,\alpha} = \{ (t,h) : h \in \mathcal H_\beta, t \in \mathcal T_{\alpha,h} \}.
$$
(Notationally, we exchange $\mathcal D_{\rm net} \gets \mathcal D_{\beta,\alpha}$.)
This is similar to the construction in \cite{sharpnack2016exact}, which has a fast implementation on GPUs using a hierarchy of convolution and downsampling layers.
The main idea is that instead of expanding the function $f \rightarrow f_h$ can instead downsample the tensor (but the details on a finite image are somewhat arduous).
We expect that a similar implementation is possible in this context, when the functions are adaptively rasterized, but such developments are outside of the scope of this paper.

Given an $\epsilon$-net, we can compute the approximate scan by restricting the evaluations to the finite set of location and scales,
$$
e_{\beta,\alpha}(X^i; f) := \max_{(t,h) \in \mathcal D_{\beta,\alpha}} v_h \left( (f_h \star X^i) (t) - v_h \right).
$$
Then we similarly can define the $\epsilon$-net pattern adapted multiscale scan statistic ($\epsilon$-PAMSS),
$$
E_{n}(X; \mathcal F) = \max_{f \in \mathcal F} \frac{1}{\sqrt n} \sum_{i=1}^n e_{\beta,\alpha}(X^i; f).
$$
It is clear that $E_n \le S_n$ since we are maximizing over a strict subset of the continuous scan.
Hence, any type 1 error bound on $S_n$ is also conferred to $E_n$.

\section{A chaining bound for standardized suprema}
\label{sec:chaining}

The scale correction in \eqref{eq:MSS} is based on a precise characterization of the rate and location of a supremum of the random field resulting from the convolution.
This standardization complicates the analysis significantly, and until now, it was only known that $s(X^i;f)$ was almost surely bounded.
We initially select the location and scale parameters $(t,h) \in \mathcal D$ and take the supremum over this selection in forming the scan of $f$ over $X^i$.
Because under $H_0$, the random variables $s(X^i; f)$ form iid copies of the same random variable, we will seek to exponential concentration inequalities bounding $s$.
With this in hand, we can hope to obtain PAC-style bounds on their average and the resulting selection of pattern from a finite function class $\mathcal F$.
We will restrict this work to $|\mathcal F| < \infty$, but a continuous class of location and scale parameters, $\mathcal D$.
While this bar seems to be set pretty low compared to the rich developments in classification, such as Vapnik-Chernovenkis theory, as we will see, controlling our statistic under finite function classes is a challenging first step to a more complete understanding of learning patterns in detection.
Let us begin with a formal definition of a subGaussian random field, and recall that this is what we assume for the field, $\{\int f_h {\rm d} W^i(t-.) : t,h \in \mathcal D \}$, which satisfied when $W^i$ is the $d$-dimensional Weiner process. 

\begin{definition}
We say that a random field, $\{Z(\iota)\}_{\iota \in \mathcal I}$, is a (zero mean) standard subGaussian process if there exists a constant $u_0 > 0$ such that
\begin{align}
\label{eq:subGaussian1}
    &\mathbb P \left\{ |Z(\iota_0) - Z(\iota_1) | \ge u \right\} \le 2 \exp \left( - \frac{u^2}{2 d_Z(\iota_0,\iota_1)} \right),\\
\label{eq:subGaussian2}
    &\mathbb P \left\{ Z(\iota_0) \ge u \right\} \le \exp \left( - \frac{u^2}{2} \right),
\end{align}
for any $\iota_0, \iota_1 \in \mathcal I$, $u > u_0$, and $d_Z(\iota_0,\iota_1) = \sqrt{\mathbb E (Z(\iota_0) - Z(\iota_1))^2}$,    
is the canonical distance.
\end{definition}
Our noise model assumption can be formally stated as $\{(f_h \star {\rm d} W^i)(t): (t,h) \in \mathcal D \}$ is a subGaussian random field with canonical distance $\nu_f$.
%This assumption is satisfied when $W^i$ the standard multivariate Wiener process.

The generic chaining is a tool for bounding the suprema of random fields with subGaussian tails, \cite{talagrand2006generic} (it also has generalizations to other types of concentration).
For general subGaussian random fields, the expectation of the supremum is bounded by a quantity, $\mathbb E \sup Z \le T_Z$, and one can also show that $(\sup Z - T_Z) / T_Z$ is a standard subGaussian random variable (using the standard chain construction). 
Hence, as the expectation bound, $T_Z$, increases the scale can also grow with $T_Z$ (meaning that the supremum becomes more dispersed).
This is in contrast to what we know from extreme value theory, where the maximum of independent random variables tends to concentrate more tightly (with a lower asymptotic variance), not less.
For example, let $\{z_i\}_{i=1}^N$ be iid standard normal random variables.
Then we know that $b_N (\max_i z_i - b_N)$ approaches a Gumbel distribution where $b_N = \sqrt{2 \log n} + o(1)$ is a specific sequence (see \cite{de2007extreme}).
Notice that the scale of the max decreases like $1 / b_N$, which is in contrast to what we obtain from the standard construction in the generic chaining, which has increasing scale.
Multiscale scan statistics also have a Gumbel limiting distribution, \cite{sharpnack2016exact}, and so we know that the scale of the statistic should be decreasing, and not increasing.
The following theorem significantly modifies the construction in the generic chaining in order to provide an exponential inequality under VC-style conditions on the entropy of the random field, $Z$.

\begin{theorem}
\label{thm:chaining}
Let $Z(\iota)$ be a standard subGaussian process over an index set $\mathcal I$.
Suppose that the metric space $(\mathcal I, d_Z)$ has the following bound on the $\epsilon$-covering number ($\rho > 0$),
\begin{equation}
    \mathcal N(\mathcal I,d_Z, \epsilon) \le \Gamma \epsilon^{-\rho}.
\end{equation}
Then there exists an $\Gamma_0 > 0$ such that for any $\Gamma \ge \Gamma_0$, the following supremum is bounded in probability,
\begin{equation}
    \mathbb P \left\{ \sqrt{c_0 \log \Gamma} \left( \sup_{\iota \in \mathcal I} Z(\iota) - \sqrt{2 \log \Gamma} \right) - a_0 \log \log \Gamma > u \right\} \le e^{-u},
\end{equation}
for $u > u_0$ where $u_0, c_0, a_0$ are constant depending on $\rho$ (but not on $\Gamma$).
In words, the supremum of such a subGaussian process is subexponential with location and rate parameter, $(2 \log \Gamma)^{1/2}$. 
\end{theorem}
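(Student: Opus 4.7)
The plan is to split
\begin{equation*}
\sup_{\iota \in \mathcal{I}} Z(\iota) = \max_{\iota \in \mathcal{A}_0} Z(\iota) + \bigl(\sup_\iota Z(\iota) - \max_{\iota \in \mathcal{A}_0} Z(\iota)\bigr)
\end{equation*}
with a deliberately chosen coarse net $\mathcal{A}_0 \subset \mathcal{I}$, and to control the two pieces on different scales. The coarse piece should carry both the location $\sqrt{2\log\Gamma}$ and the subexponential fluctuation at rate $\sqrt{c_0\log\Gamma}$; the chaining remainder must then be squeezed down to order $1/\sqrt{\log\Gamma}$ so that, after multiplication by $\sqrt{c_0\log\Gamma}$, it is absorbed into a constant. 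The nonstandard feature is the choice of coarsest scale: rather than taking $\epsilon_0$ comparable to the diameter of $(\mathcal{I}, d_Z)$, I take $\epsilon_0 = 1/\log\Gamma$, so that the Dudley-type chaining sum $\sum_n \epsilon_n\sqrt{\log|\mathcal{A}_n|}$ collapses to order $\epsilon_0\sqrt{\log\Gamma} \asymp 1/\sqrt{\log\Gamma}$, at the mild cost of inflating $|\mathcal{A}_0|$ by a factor of $(\log\Gamma)^\rho$.

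With nets $\mathcal{A}_n$ of radius $\epsilon_n = \epsilon_0 \cdot 2^{-n}$ satisfying $|\mathcal{A}_n| \le \Gamma\epsilon_n^{-\rho}$, the coarse-net maximum is handled directly from \eqref{eq:subGaussian2}:
\begin{equation*}
\mathbb{P}\bigl(\max_{\iota \in \mathcal{A}_0} Z(\iota) > \sqrt{2\log|\mathcal{A}_0|} + s\bigr) \le |\mathcal{A}_0|\, e^{-(\sqrt{2\log|\mathcal{A}_0|} + s)^2/2} \le e^{-s\sqrt{2\log|\mathcal{A}_0|} - s^2/2},
\end{equation*}
which, under the substitution $s = u/\sqrt{2\log|\mathcal{A}_0|}$, is exactly a subexponential tail at the required rate. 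For the remainder I build the standard chain $\pi_n(\iota) \in \mathcal{A}_n$ with $d_Z(\iota,\pi_n(\iota)) \le \epsilon_n$, apply \eqref{eq:subGaussian1} together with a union bound over the at most $|\mathcal{A}_{n+1}|$ distinct links per level, and allocate failure probability $e^{-u}/2^{n+2}$ per level. The resulting thresholds $t_n \asymp \epsilon_n \sqrt{\log|\mathcal{A}_{n+1}| + n + u}$ telescope to
\begin{equation*}
\sup_\iota Z(\iota) - \max_{\iota \in \mathcal{A}_0} Z(\iota) \le \frac{C_1}{\sqrt{\log\Gamma}} + \frac{C_2\sqrt{u}}{\log\Gamma}
\end{equation*}
with probability at least $1 - e^{-u}$; multiplying by $\sqrt{2\log\Gamma}$ turns the right-hand side into a constant plus a term bounded by $u$ for $u \ge 1$.

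Combining the two bounds and expanding
\begin{equation*}
\sqrt{2\log|\mathcal{A}_0|} \le \sqrt{2\log\Gamma + 2\rho\log\log\Gamma} = \sqrt{2\log\Gamma} + \frac{\rho\log\log\Gamma}{\sqrt{2\log\Gamma}} + O\!\left(\frac{(\log\log\Gamma)^2}{(\log\Gamma)^{3/2}}\right)
\end{equation*}
identifies the claimed $a_0\log\log\Gamma$ correction, with $c_0 = 2$ and $a_0$ linear in $\rho$; all remaining absolute constants get absorbed into $u_0$ and $\Gamma_0$. The main obstacle is that a naive generic-chaining argument anchored at the diameter scale only yields a subGaussian fluctuation of size $\asymp \sqrt{\log\Gamma}$, which is of the same order as the location $\sqrt{2\log\Gamma}$ and ruins any Gumbel-type concentration at scale $1/\sqrt{\log\Gamma}$. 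The fix is precisely the counterintuitive choice $\epsilon_0 = 1/\log\Gamma$: pre-approximating $\mathcal{I}$ at this finer-than-usual scale absorbs nearly all of the process variation into the single union bound over $\mathcal{A}_0$, where the subGaussian-to-subexponential transition of the max-of-$N$-Gaussians tail delivers the required rate, while the leftover chaining error is shrunk to the correct sub-dominant $1/\sqrt{\log\Gamma}$ order.
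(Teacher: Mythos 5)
Your proposal is correct and reaches the theorem, but it executes the chaining step quite differently from the paper. Both arguments share the decisive trick --- expanding $\sqrt{2\log N + u^2} \le \sqrt{2\log N} + u^2/(2\sqrt{2\log N})$ so that the union bound over a cardinality-$\approx\Gamma$ coarse net yields a subexponential rather than subGaussian tail, with the $\log\log\Gamma$ correction coming from the excess $\log|\mathcal A_0| - \log\Gamma$ --- and both rely on forcing the chaining remainder down to order $1/\sqrt{\log\Gamma}$ so that it survives multiplication by $\sqrt{c_0\log\Gamma}$. Where you differ is in how that remainder is tamed: the paper modifies the generic chaining itself, using admissible partitions of size $a^{a^k}$ with $a = (1-1/\log G)^{-1}\to 1$, starting the chain at a level $k_0$ defined implicitly by $a^{k_0}\le\rho(\log_a G + k_0)\le a^{k_0+1}$ (which requires the separate existence result of Lemma \ref{lem:k0exists}), and weighting the per-level failure probabilities by geometrically growing factors $d_k$. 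You instead keep a completely standard dyadic Dudley chain and simply anchor it at the non-standard coarse scale $\epsilon_0 = 1/\log\Gamma$, inflating $|\mathcal A_0|$ by only $(\log\Gamma)^\rho$ --- harmless after the square root --- while collapsing the Dudley sum to $O(\epsilon_0\sqrt{\log\Gamma})$. Your route is cleaner and dispenses with the $k_0$ machinery entirely; what it gives up is the paper's (at least nominal) flexibility toward other entropy growth rates, since the choice $\epsilon_0 = 1/\log\Gamma$ leans on the covering numbers being polynomial in $1/\epsilon$. Two small points to tighten: the union bound at level $n$ is over at most $|\mathcal A_n|\cdot|\mathcal A_{n+1}|$ links rather than $|\mathcal A_{n+1}|$ unless you make the nets hierarchical (this only changes constants inside the square root); and $\sqrt{2\log\Gamma}\cdot C_2\sqrt{u}/\log\Gamma \le u$ requires $u \ge 2C_2^2$ rather than $u\ge 1$, which is fine since it is absorbed into $u_0$.
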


\begin{remark}
For a VC class of sets with metric $L_2(Q)$ for some measure $Q$ and VC dimension $d_V$, the above entropy bound holds with $\rho = 2d_V - 1$ and $\log \Gamma = d_V \log (4e) + \log d_V + K$ for some constant $K$ (see Thm 2.6.4 of \cite{van1996weak}). 
\end{remark}

\noindent
{\bf Proof Sketch.} The generic chaining consists of a clever use of the union bound, subGaussian concentration, and a detailed chain construction.
In order to illustrate how we can obtain an exponential inequality for the max of subGaussian random variables, let us consider the max of iid standard Gaussian random variables, $\{z_i\}_{i=1}^N$.
Notice that by union bound and subGaussian concentration,
$$
\mathbb P \left\{ \max_i z_i > \sqrt{2 \log N + u^2} \right\} %\le N \mathbb P \left\{ z_1 > \sqrt{2 \log N + u^2} \right\}
\le N e^{-\log N - \frac{u^2}{2}} = e^{-\frac{u^2}{2}}.
$$
Now, instead of bounding, $\sqrt{2 \log N + u^2} \le u + (\log N) / u$ as is done in the generic chaining, we will use the bound $\sqrt{2 \log N + u^2} \le \sqrt{2 \log N} + u^2 / (2 \sqrt{2 \log N})$ (we Taylor expand the square root around $2 \log N$ instead of around $u^2$).
Hence, we obtain,
$$
\mathbb P \left\{ 2 \sqrt{2 \log N} \left( \max_i z_i - \sqrt{2 \log N} \right) > u \right\} \le e^{-u}.
$$
This is the main technique that we use for obtaining subexponential bounds from the max of independent subGaussian random variables.

The chain construction refers to a sequence of partitionings of the space $\mathcal I$.  
Given a partition $\mathcal A_k$ of $\mathcal I$, we let $A_k(\iota) \in \mathcal A_k$ be the element that contains the point $\iota \in \mathcal I$ and $\Delta(A_k(\iota))$ be its radius.
In the standard generic chaining, a partition is called admissible if $|\mathcal A_k| \le 2^{2^k}$.
Then the supremum is controlled by uniformly bounding the centers of the single element $A_0 \in \mathcal A_0$, and then the differences between the centers of $A_{k} \in \mathcal A_k$ and their closest centers in at level $k-1$.
The standard result is a bound on the supremum based on a functional of the radii $\Delta(A_k(\iota))$.
For our modifications, the subexponential bound above requires a growing number of independent points to work, so we begin our chain at a deeper level than at $k=0$.
Furthermore, we have to modify the definition of an admissible partition to be $|\mathcal A_k| \le a^{a^k}$ where $a \rightarrow 1$ as $\Gamma \rightarrow \infty$.
Technical details regarding the chain introduce the $\log \log \Gamma$ term.
See the appendix for a complete proof.
Although, we assume a specific form for the covering numbers, it may be possible to generalize this technique to other entropy bounds.

\section{Type 1 error guarantees for learning patterns}
\label{sec:main}

A type 1 error---detecting an anomaly under the null hypothesis---is typically the first error to be controlled in the Neyman-Pearson testing framework.
Our statistic $S_n$ will be compared to a threshold, which is determined through calibration, simulation, or theoretical guarantees.
In this section, we will provide a finite-sample probabilistic bound for the multiscale scan statistic with exponential tail probability.
We will then use this result to obtain a finite sample bound on the PAMSS, $S_n$, which increases logarithmically with the number of functions, $\log |\mathcal F|$.

\begin{lemma}
\label{lem:single_scale}
Suppose that $f \in \mathcal F$ satisfies either \eqref{eq:TVC} or \eqref{eq:AHC}. 
Let $\ell \in \{0,\ldots, \lfloor \log_2 L\rfloor\}^d$, and $\mathcal H_2(\ell) = \times_j [2^{\ell_j}, 2^{\ell_j + 1}]$.  Then when $L \ge L_0$,
\begin{equation}
\mathbb P \left\{ c_1 \cdot \max_{h \in \mathcal H_\ell, t \in \mathcal T_h} v_h\left( (f_h \star {\rm d} X^i)(t) - v_h\right) - a_1 \log \log L > u \right\} \le e^{-u}
\end{equation}
for constants $L_0,a_1,c_1 > 0$ depending on $\gamma, d$ only.
\end{lemma}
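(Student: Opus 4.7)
My plan is to apply Theorem~\ref{thm:chaining} to the centered noise field $Z(t,h) := (f_h \star \mathrm{d} W^i)(t)$ restricted to the index set $\mathcal D_\ell := \{(t,h) : h \in \mathcal H_2(\ell),\ t \in \mathcal T_h\}$, which under $H_0$ is a standard subGaussian field with canonical distance $\nu_f$. The key reduction exploits that on a single dyadic band the scale correction $v_h^2 = 2\sum_j \log(L/h_j)$ is essentially constant: it varies by at most $2d\log 2$, so $v_h = v_{2^\ell} + O(v_{2^\ell}^{-1})$ uniformly on $\mathcal H_2(\ell)$. Using $v_h Z \le v_{2^\ell} \max(Z,0)$ and $v_h^2 \ge v_{2^\ell}^2 - 2d\log 2$, the scan on this band reduces to
$$
\max_{(t,h) \in \mathcal D_\ell} v_h\bigl(Z(t,h) - v_h\bigr) \;\le\; v_{2^\ell}\Bigl(\sup_{\mathcal D_\ell} Z - v_{2^\ell}\Bigr) + 2d\log 2.
$$

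The next step is to bound the covering number $\mathcal N(\mathcal D_\ell, \nu_f, \epsilon) \le \Gamma_\ell\,\epsilon^{-\rho}$. A change of variables gives
$$
\nu_f\bigl((t,h),(t',h)\bigr)^2 = \int \bigl|f(u) - f(u + (t-t')/h)\bigr|^2 \,\mathrm{d} u,
$$
which under AHC is at most $c_A\|(t-t')/h\|_2^{2\gamma_2}$, and under TVC is at most $2\|f\|_\infty \|f\|_{\rm TV}\,\|(t-t')/h\|_\infty$ via the $L_2$--$L_1$ inequality $\|g\|_{L_2}^2 \le 2\|g\|_\infty \|g\|_{L_1}$, giving an effective H\"older exponent $\gamma = 1/2$. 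A separate bound $\|\partial_{h_j} f_h\|_{L_2} = O(1/h_j)$, obtained by differentiating $h_\bullet^{-1/2} f(\cdot/h)$ and rescaling, controls scale perturbations, and because $\mathcal H_2(\ell)$ has bounded dyadic ratio in each coordinate the scale block contributes at most a factor $\epsilon^{-d}$. Counting Euclidean balls in the rescaled variable $u = t/h$ over the box $\mathcal T_h$, whose volume is at most $(2L)^d/h_\bullet$, then yields
$$
\log \Gamma_\ell \;=\; d\log L - (\log 2)\sum_j \ell_j + O(1) \;=\; \tfrac{1}{2} v_{2^\ell}^2 + O(1), \qquad \rho = \rho(d,\gamma).
$$

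Applying Theorem~\ref{thm:chaining} with this $\Gamma_\ell$ then gives, with probability at least $1 - e^{-u}$,
$$
\sqrt{c_0 \log \Gamma_\ell}\,\Bigl(\sup_{\mathcal D_\ell} Z - \sqrt{2\log \Gamma_\ell}\Bigr) - a_0 \log\log \Gamma_\ell \;\le\; u.
$$
Since $\sqrt{2\log \Gamma_\ell} = v_{2^\ell} + O(v_{2^\ell}^{-1})$ and $v_{2^\ell}/\sqrt{c_0\log \Gamma_\ell} = \Theta(1)$, multiplying through by $v_{2^\ell}/\sqrt{c_0\log \Gamma_\ell}$ and combining with the single-band reduction transforms this into the claimed inequality: the $O(1)$ remainders, including the bias $v_{2^\ell}(\sqrt{2\log \Gamma_\ell} - v_{2^\ell}) = O(1)$, are absorbed into $c_1$ and $a_1$, and $\log\log \Gamma_\ell \le \log\log L + O(1)$ replaces the log-log term. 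The hypothesis $L \ge L_0$ guarantees both $\Gamma_\ell \ge \Gamma_0$ for Theorem~\ref{thm:chaining} and that $v_{2^\ell}$ is bounded below on the relevant range.

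The main obstacle I anticipate is the covering estimate under the weaker TVC assumption, where the H\"older exponent $1/2$ must be extracted via boundedness of $f$ and the $L_2$--$L_1$ trick, and where joint perturbations in $t$ and $h$ must be kept decoupled up to constants so that the rescaling $t \mapsto t/h$ does not interact badly with the scale cover. Everything downstream is a careful unwinding of the constants produced by Theorem~\ref{thm:chaining}.
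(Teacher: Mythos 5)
Your overall architecture is exactly the paper's: restrict to one dyadic band where $v_h$ is constant up to $O(1/v_h)$, establish a polynomial covering bound $\mathcal N(\mathcal D_\ell,\nu_f,\epsilon)\le\Gamma_\ell\,\epsilon^{-\rho}$ with $\log\Gamma_\ell=\sum_j\log(L/h_j)+O(1)=\tfrac12 v_{2^\ell}^2+O(1)$, invoke Theorem~\ref{thm:chaining}, and absorb the $y_h=v_h+O(1/v_h)$ discrepancy and the $\log\log\Gamma_\ell\le\log\log L+O(1)$ term into the constants. The band reduction and the final constant-unwinding are fine (the $\max(Z,0)$ caveat is irrelevant on the upper-tail event). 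Where you diverge from the paper is in how the covering number is obtained: the paper counts the explicit net $\mathcal D_{\beta,\alpha}\cap\mathcal D'$ and delegates the metric estimates to Lemma~\ref{lem:distbd} and Theorem~\ref{thm:epsnet}, whereas you re-derive the metric bounds directly, and that is where there is a genuine gap.

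Concretely, your two key estimates are not justified under the stated hypotheses, under which the constants $c_1,a_1$ must depend on $\gamma,d$ only. First, the translation bound under \eqref{eq:TVC} uses $\|g\|_{L_2}^2\le 2\|f\|_\infty\|g\|_{L_1}$, which imports $\|f\|_\infty$; for $d\ge2$ a bound on the isotropic total variation together with $\|f\|_{L_2}=1$ does not control $\|f\|_\infty$ (Sobolev embedding of $W^{1,1}$ stops short of $L^\infty$), so your constant is not a function of $\gamma_1,d$. Second, the scale perturbation bound $\|\partial_{h_j}f_h\|_{L_2}=O(1/h_j)$ requires $\|\nabla f\|_{L_2}<\infty$ with a quantitative bound, which follows from neither \eqref{eq:TVC} (an $L_1$ bound on $\|\nabla f\|_2$) nor \eqref{eq:AHC} with $\gamma_2<1$ (which only yields a H\"older, not Lipschitz, modulus in $h$, as reflected in the $|(h_j-h_j')/\sqrt{h_jh_j'}|^{2\gamma_2}$ term of Lemma~\ref{lem:distbd}.2). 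The paper sidesteps both issues: under \eqref{eq:TVC} it integrates by parts against the Wiener process and bounds everything by $\|f\|_{\rm TV}$ times expected suprema of Brownian increments (Lemma~\ref{lem:distbd}.1), and under \eqref{eq:AHC} it cites the corresponding estimate of Proksch et al. To close your argument you should replace your two metric estimates with Lemma~\ref{lem:distbd} (or reprove its content); everything downstream of the covering bound in your proposal then goes through as written.
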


\begin{proof}
Let $\mathcal D'$ denote the index set in the above display.
Throughout, let $c_1$ and $a_1$ denote arbitrary constants depending on $d,\gamma$ alone.
By assumption, $\{(f_h \star {\rm d} W)(t) : (t,h) \in \mathcal D'\}$ is a subGaussian random field with canonical distance $\nu_f$.  
An $\epsilon$-net of $\mathcal D$, by definition, will be an $\epsilon$-covering of $\mathcal D' \subset \mathcal D$, so we just need to bound the size of the $\epsilon$-net, $\mathcal D_{\beta,\alpha}' := \mathcal D' \cap \mathcal D_{\beta,\alpha}$.
By construction, 
$$
|\mathcal D_{\beta,\alpha}'| = \sum_{h \in \mathcal H_2(\ell) \cap \mathcal H_\beta} |\mathcal T_{\alpha,h}| \le c_1 \sum_{h \in \mathcal H_2(\ell) \cap \mathcal H_\beta} \prod_j \frac{L}{\alpha h_j} \le c_1 \frac{(L \log_\beta 2)^d}{\alpha^d 2^{\sum_j \ell_j}}.
$$
and we can take $\alpha, \beta$ as specified in the proof of Theorem \ref{thm:epsnet}.
Furthermore, notice that $2^\ell_j$ is within a factor of 2 of any $h_j$ in $\mathcal H_2(\ell)$.
Then, there are constants $\tilde C, \tilde c$ such that 
$
| \mathcal D_{\beta,\alpha}' | \le \tilde C L^d / (h_\bullet \epsilon^{\tilde c}).
$
We can see this because $\alpha^d (\log \beta)^d \le \alpha^d (\beta - 1)^d \le \epsilon^{\tilde c}$ for some constant depending on $d$.
By Theorem \ref{thm:chaining}, we have the subexponential bound for the random variable, 
$$\sup_{(t,h) \in \mathcal D'} y_h ((f_h \star {\rm d}W)(t) - y_h) {\rm ~~where~~}y_h = \sqrt{2 \sum_j \log (L/h_j) + 2\log \tilde C}$$
Notice that $y_h = v_h + O(1 / v_h)$ which gives us the result, along with $\log v_h \le C \log \log L$.
\end{proof}

From here, we simply apply the union bound with the bound in Lemma \ref{lem:single_scale}, for every element in the partitioning $\mathcal H = \cup_\ell \mathcal H_2(\ell)$.
There are on the order of $\log_2 (d \log_2 L) = \log_2 \log L + O(1)$ such elements.
This gives us 
$$
\mathbb P \left\{ c_2 \cdot \frac{s_n(X^i, f)}{\log \log L} - a_2 > u \right\} \le e^{-u},
$$
for some constants $c_2,a_2$ depending on $d,\gamma$.
The $\log \log L$ is reminiscient of the law of iterated logarithm, and indeed this is a LIL result (but in multiple dimensions).
Hence, $z_i(f) := s_n(X^i, f)/\log \log L$ is subexponential with $K := \| z_i(f) \|_{\psi_1}$ only depending on $d, \gamma$ ($\|. \|_{\psi_1}$ is the Orlitz 1-norm).
Thus, by the subexponential Bernstein inequality (Prop.~5.16 in \cite{vershynin2010introduction}),
$$
\mathbb P \left\{ \sum_{i=1}^n z_i(f) \ge t \right\} \le \exp\left( -c_3 \min\left\{ \frac{t^2}{n K^2}, \frac{t}{K} \right\} \right),  
$$
where $c_3$ is an absolute constant.
This gives us our main result (we absorb $c_3$ into $K$ below).

\begin{theorem}
\label{thm:main}
Let $\mathcal F$ be finite and assume that either all functions in $\mathcal F$ satisfy either \eqref{eq:TVC} or \eqref{eq:AHC}.
Let
\begin{equation}
    F_n(\delta) := \left\{ \begin{array}{ll}
    \sqrt{K \log\left( \frac{|\mathcal F|}{\delta} \right)}, &\log |\mathcal F| \le \frac nK + \log \delta\\
    \frac{K}{\sqrt n} \log \left( \frac{|\mathcal{F}|}{\delta} \right), &\log |\mathcal F| > \frac nK + \log \delta
    \end{array}\right.
\end{equation}
then for some constants $K, L_0$ depending on $d,\gamma$, and $L > L_0$,
\begin{equation}
    \mathbb P\left\{ S_n(X,\mathcal F) > F_n(\delta) \cdot \log \log L \right\} \le \delta.
\end{equation}
\end{theorem}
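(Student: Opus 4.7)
The strategy is to promote the single-scale-block subexponential tail of Lemma~\ref{lem:single_scale} into a single subexponential tail for $s_n(X^i;f)$ itself, then apply the subexponential Bernstein inequality to the iid sample $(s_n(X^i;f))_{i=1}^n$, and finally take a union bound over $f \in \mathcal F$. The plan proceeds in three steps.

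First, I would assemble the full multiscale statistic as $s_n(X^i;f) = \max_\ell M_\ell(X^i;f)$, where $M_\ell$ denotes the maximum of the standardized field over the dyadic scale block $\mathcal H_2(\ell)$, with $\ell$ ranging over $\{0,\dots,\lfloor\log_2 L\rfloor\}^d$. Lemma~\ref{lem:single_scale} supplies a subexponential tail for each $c_1 M_\ell - a_1 \log\log L$, and there are at most $C(\log L)^d$ values of $\ell$. The logarithm of this count is $O(\log\log L)$, so a union bound over $\ell$ only inflates the centering term by another $O(\log\log L)$ while preserving the unit rate parameter $e^{-u}$. Dividing the resulting inequality by $\log\log L$ and using $\log\log L \ge 1$ for $L$ large then shows that
\[
z_i(f) := \frac{s_n(X^i;f)}{\log\log L}
\]
is subexponential with Orlicz $\psi_1$-norm bounded by a constant $K$ depending only on $d$ and $\gamma$.

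Second, under $H_0$ the fields $X^i$ are iid, hence for each fixed $f$ the sequence $\{z_i(f)\}_{i=1}^n$ consists of iid subexponentials. I would invoke the subexponential Bernstein inequality (e.g.\ Prop.~5.16 of \cite{vershynin2010introduction}) and rewrite it in terms of the normalized quantity $n^{-1/2}\sum_i s_n(X^i;f) = (\log\log L)\cdot n^{-1/2}\sum_i z_i(f)$, producing, for each $u>0$,
\[
\mathbb P\bigl\{n^{-1/2}\textstyle\sum_i s_n(X^i;f) > u\log\log L\bigr\} \le \exp\bigl(-c\min\{u^2/K^2,\, u\sqrt n/K\}\bigr),
\]
with $c$ an absolute constant. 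A final union bound over $f\in\mathcal F$ multiplies the right-hand side by $|\mathcal F|$. Setting that bound equal to $\delta$ and solving for $u$ yields the two branches of $F_n(\delta)$: the crossover occurs at $u = K\sqrt n$, so when $u = \sqrt{K \log(|\mathcal F|/\delta)}$ (from the subGaussian regime $u^2/K^2$) falls below $K\sqrt n$, which rearranges to $\log|\mathcal F| \le n/K + \log\delta$ after absorbing $c$ into $K$, this value is taken; otherwise the Poisson regime $u\sqrt n/K$ binds and yields $u = (K/\sqrt n)\log(|\mathcal F|/\delta)$.

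The only genuinely delicate step is the first. It is essential that the union bound over $O((\log L)^d)$ dyadic blocks enter additively in the centering of the tail rather than multiplicatively in the rate; this is precisely what allows $z_i(f)$ to be subexponential with scale independent of $L$. Because the per-block tail produced by Lemma~\ref{lem:single_scale} already has unit rate parameter ($e^{-u}$, not $e^{-u/R}$ with $R$ growing in $L$), the $\log N \sim d\log\log L$ cost can be rolled into the existing $a_1 \log\log L$ centering with only a redefinition of constants. Once this is in hand, the remaining steps are routine applications of Bernstein's inequality and the union bound, exactly along the lines of the discussion immediately preceding the theorem.
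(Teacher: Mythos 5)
Your proposal follows essentially the same route as the paper's own derivation: union-bound Lemma~\ref{lem:single_scale} over the dyadic scale blocks so that the $O(d\log\log L)$ cost enters additively in the centering (leaving the unit rate intact), conclude that $z_i(f)=s_n(X^i;f)/\log\log L$ is subexponential with $\psi_1$-norm depending only on $d,\gamma$, and then apply the subexponential Bernstein inequality followed by a union bound over $\mathcal F$, with the two branches of $F_n(\delta)$ arising from the $\min$ in Bernstein exactly as in the paper's appendix note. The argument is correct (and in fact slightly more careful than the paper about the count of scale blocks, which is $O((\log L)^d)$ rather than $O(\log\log L)$ as the text loosely states), so there is nothing further to add.
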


Theorem \ref{thm:main} proves this paper's main hypothesis, that we can learn patterns from a finite dictionary where the type 1 error bound increases logarithmically with $|\mathcal F|$.
In fact if $\log |\mathcal F| = o(n)$ then we have subGaussian concentration of the final test statistic $S_n$.

\section{$\epsilon$-net approximation and type 2 error}
\label{sec:eps}

We have provided a construction of the $\epsilon$-net with parameters $\alpha> 0$ and $\beta > 1$, but we did not specify the selection of either or prove the veracity of our claim that this indeed produces an $\epsilon$-net.
We used the construction of our $\epsilon$-net in the proof of Lemma \ref{lem:single_scale}, so we will be careful in this section to prove the correctness of our construction from first principles.
The following technical lemma is the main driver of these results.

\begin{lemma}
\label{lem:distbd}
There is a constant $C$ depending on $d$ alone such that
\begin{enumerate}
    \item[1.] Suppose that \eqref{eq:TVC} holds for the class $\mathcal F$, then
    $$
    \nu_f((t,h),(t',h'))^2 \le C \gamma_1 \left( \left\| \frac{t - t'}{h} \right\|^2_2 + \left\| \frac{h - h'}{h} \right\|^2_2 + \left( \sqrt{\frac{h_\bullet'}{h_\bullet}} - 1 \right)^2\right).
    $$
    %where $C(h,h') = \max\{\beta - 1,1 - \beta^{-1}\}$ for $\beta = \max_j \max \{ h_j / $
    \item[2.] [\cite{proksch2016multiscale}] Suppose that \eqref{eq:AHC} holds for the class $\mathcal F$, then
    $$
    \nu_f((t,h),(t',h'))^2 \le C \left( \sum_{j=1}^d \left| \frac{t_j-t_j'}{h_j} \right|^{2 \gamma_2} + \sum_{j=1}^d \left| \frac{h_j - h_j'}{\sqrt{h_j h_j'}} \right|^2 + \sum_{j=1}^d \left| \frac{h_j - h_j'}{\sqrt{h_j h_j'}} \right|^{2\gamma_2} \right).
    $$
\end{enumerate}
\end{lemma}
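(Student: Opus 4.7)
The plan is to bound $\nu_f((t,h),(t',h'))^2 = \|S_t f_h - S_{t'} f_{h'}\|_{L_2}^2$ by a triangle inequality through an intermediate function that isolates a pure translation from a pure rescaling. Part 2 is exactly the Proksch--Wahl bound under \eqref{eq:AHC}, so I focus on Part 1 under \eqref{eq:TVC}. Since $S_{t'}$ is an $L_2$-isometry,
\[
\nu_f((t,h),(t',h')) \le \|S_t f_h - S_{t'} f_h\|_{L_2} + \|f_h - f_{h'}\|_{L_2},
\]
which reduces the lemma to bounding one piece that only sees a translation and one piece that only sees a scale change.

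For the translation piece, I would use the fundamental theorem of calculus
\[
f_h(z+s) - f_h(z) = \int_0^1 \nabla f_h(z+us)\cdot s\, du, \qquad s = t - t',
\]
and apply Fubini to get the $L_1$ modulus $\int |f_h(z+s) - f_h(z)|\, dz \le \|s\|_2 \|f_h\|_{TV}$. Because $\|\cdot\|_{TV}$ only controls an $L_1$ difference, I would upgrade to $L_2$ via the pointwise sandwich $|g|^2 \le 2\|f_h\|_\infty |g|$ (valid since $f \in C^1(\Omega)$ has compact support and is therefore bounded). The chain rule $\nabla f_h(\tau) = h_\bullet^{-1/2}\,\mathrm{diag}(1/h_j)\,(\nabla f)(\tau/h)$, followed by the substitution $u = \tau/h$, yields the scaling $\|f_h\|_{TV} \lesssim h_\bullet^{1/2}/\min_j h_j \cdot \|f\|_{TV}$ and $\|f_h\|_\infty = h_\bullet^{-1/2}\|f\|_\infty$, and combining these gives the desired $\|(t-t')/h\|_2^2$ term.

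For the scale piece I would further split
\[
f_h(\tau) - f_{h'}(\tau) = h_\bullet^{-1/2}\bigl(f(\tau/h) - f(\tau/h')\bigr) + \bigl(h_\bullet^{-1/2} - (h'_\bullet)^{-1/2}\bigr) f(\tau/h').
\]
The second summand integrates directly to $(h_\bullet^{-1/2} - (h'_\bullet)^{-1/2})^2\, h'_\bullet\, \|f\|_{L_2}^2 = (\sqrt{h'_\bullet/h_\bullet} - 1)^2$, which is precisely the third term of the lemma. For the first summand, the substitution $u = \tau/h'$ converts it (up to a Jacobian $h'_\bullet/h_\bullet$) into $\int |f(u) - f(u \cdot h'/h)|^2 du$; the position-dependent shift $u \cdot (h'/h - 1)$ has coordinatewise magnitude at most $|h'_j - h_j|/h_j$ on $\Omega = [-1,1]^d$, so I can reapply the same TV/$L_\infty$ sandwich as in the translation step to extract $\|(h - h')/h\|_2^2$.

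The main obstacle I anticipate is the $L_2$-versus-$L_1$ mismatch between what \eqref{eq:TVC} provides and what $\nu_f$ measures: TV gives only an $L_1$ modulus of continuity, and closing the gap without a Sobolev-type assumption forces one to invoke the $L_\infty$ bound inherited from $C^1$ on the compact $\Omega$. The constants absorbed into $C$ therefore implicitly depend on $\sup_{f\in\mathcal F}\|f\|_\infty$, a standing but unstated boundedness that is consistent with $\mathcal F \subset C^1(\Omega)$. A secondary technical issue is that the substitution in the shape piece produces a \emph{position-dependent} shift rather than a uniform one; this is handled by pulling the sup of $\|u\|$ over $\Omega$ out of the line integral, which costs only a dimensional constant.
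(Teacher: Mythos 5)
Your decomposition into a pure translation plus a pure rescaling is clean, and the second summand of your scale piece (which integrates exactly to $(\sqrt{h_\bullet'/h_\bullet}-1)^2$) is correct. The genuine gap is in the $L_1$-to-$L_2$ upgrade. The chain
\[
\|f_h(\cdot+s)-f_h\|_{L_2}^2 \;\le\; 2\|f_h\|_\infty\,\|f_h(\cdot+s)-f_h\|_{L_1}\;\le\; 2\|f_h\|_\infty\,\|f_h\|_{\rm TV}\,\|s\|_2
\]
is \emph{linear} in $\|s\|_2$, not quadratic; inserting $\|f_h\|_\infty = h_\bullet^{-1/2}\|f\|_\infty$ and $\|f_h\|_{\rm TV}\lesssim h_\bullet^{1/2}\|f\|_{\rm TV}/\min_j h_j$ lands you on $\nu_f^2\lesssim \|f\|_\infty\,\gamma_1\,\|(t-t')/h\|_2$, i.e.\ the first power of the relative shift rather than the $\|(t-t')/h\|_2^2$ the lemma asserts, and the same first-power loss occurs in the rescaling summand. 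For small shifts---exactly the regime the $\epsilon$-net needs---this is a strictly weaker bound, so the sentence ``combining these gives the desired $\|(t-t')/h\|_2^2$ term'' does not follow from the steps preceding it. To get the quadratic modulus deterministically you would apply Cauchy--Schwarz to the fundamental-theorem identity, which requires $\nabla f\in L_2$; condition \eqref{eq:TVC} supplies only an $L_1$ control of the gradient, so your route cannot close this gap without an extra assumption. (Your constant also silently absorbs $\sup_{f\in\mathcal F}\|f\|_\infty$, which is not in the statement; that is a second, smaller deviation.)

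The paper takes a different route precisely to dodge this mismatch: it integrates by parts against the Wiener process, writing $(f_h\star{\rm d}W)(t)=\int_\Omega h_\bullet^{-1/2}W(t-hu)\cdot\nabla f(u)\,{\rm d}u$, pulls $\|f\|_{\rm TV}$ out of the difference of two such integrals (incidentally producing $\gamma_1^2$ rather than the stated $\gamma_1$), and then bounds $\mathbb E\sup_u$ of the resulting Brownian increments by Brownian scaling and the reflection principle, so the modulus of continuity is charged to $W$ rather than to $f$. You are not alone in hitting the wall, though: since $\mathbb E\sup_{0\le s\le\delta}B(s)^2$ is itself of order $\delta$, the reflection-principle display in the paper is also first-order in the increments as written, so the quadratic form in the statement is delicate either way. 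But as a proof of the lemma as stated, your argument stops one power short, and that is the concrete defect to repair.
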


Lemma \ref{lem:distbd}.1 is proven using the fact that the metric $\nu_f$ is the canonical metric for $S_t f_h$ convolved with the Wiener process.
Then by a strategic use of integration by parts we can bound the variance form of $\nu_f$.
It is immediately clear that we can use this result to prove that $\mathcal D_{\beta,\alpha}$ forms an $\epsilon$-net of the space $(\mathcal D, \nu_f)$.

% Given a function $f \in \mathcal F$, define the rasterized function,
% $$
% f^{R}(t) =  
% $$

\begin{theorem}
\label{thm:epsnet}
%The algorithm \ref{alg:epsnet} implicitly constructs an $\epsilon$-net by identifying
Suppose that either one of \eqref{eq:TVC} or \eqref{eq:AHC} holds.
Let $\epsilon > 0$, then there exists a $\beta > 1, \alpha > 0$ depending on $d,\epsilon$ such that for any $(t,h) \in \mathcal D$, there exists $(t',h') \in \mathcal D_{\beta,\alpha}$ (the $\epsilon$-net) with $\nu_f ((t,h),(t',h')) \le \epsilon.$
\end{theorem}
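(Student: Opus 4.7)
The plan is to apply Lemma \ref{lem:distbd} directly, reducing the task to showing that each term on the right-hand side can be made at most $\epsilon^2/(3C\gamma_1)$ (or the corresponding AHC constant) by choosing $\alpha$ small and $\beta$ close to $1$. Given $(t,h) \in \mathcal D$, I would first construct $h'$ coordinatewise by setting $\ell_j := \lfloor \log_\beta h_j \rfloor$ and $h_j' := \beta^{\ell_j}$, so that $h_j/\beta \le h_j' \le h_j$ for each $j$. This immediately yields $h_j' \in \mathcal H_\beta$, as well as the pointwise bounds $|h_j - h_j'|/h_j \le 1 - 1/\beta$ and $|h_j - h_j'|/\sqrt{h_j h_j'} \le \sqrt{\beta} - 1/\sqrt{\beta}$, both of which tend to $0$ as $\beta \to 1^+$. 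Similarly $h_\bullet'/h_\bullet \in [\beta^{-d}, 1]$ controls the nonlinear scale term $(\sqrt{h_\bullet'/h_\bullet}-1)^2 \le (1-\beta^{-d/2})^2$.

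Next I would construct $t'$ coordinatewise as the nearest multiple of $\alpha h_j'$ to $t_j$, giving $|t_j - t_j'| \le \alpha h_j'/2 \le \alpha h_j/2$, hence $|(t_j-t_j')/h_j| \le \alpha/2$. This produces all the bounds needed to apply Lemma \ref{lem:distbd}: in the TVC case,
\begin{equation*}
\nu_f((t,h),(t',h'))^2 \le C\gamma_1 \left( \frac{d\alpha^2}{4} + d(1-\beta^{-1})^2 + (1-\beta^{-d/2})^2 \right),
\end{equation*}
and in the AHC case an analogous bound in terms of $(\alpha/2)^{2\gamma_2}$ and $(\sqrt\beta - 1/\sqrt\beta)^{2\gamma_2}$. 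In both cases every term vanishes as $\alpha \to 0$ and $\beta \to 1$, so one can first fix $\beta > 1$ close enough to $1$ that the $\beta$-dependent terms sum to at most $\epsilon^2/2$, then choose $\alpha$ small enough that the $\alpha$-dependent term is at most $\epsilon^2/2$.

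The main obstacle I anticipate is the boundary issue for $t'$: the nearest multiple of $\alpha h_j'$ to $t_j$ might land just outside $\mathcal T_{h'} = \times_j[-(L-h_j'),L-h_j']$, since $|t_j| \le L - h_j$ does not strictly guarantee $|t_j| \le L - h_j' - \alpha h_j'/2$. I would handle this by rounding \emph{toward the interior} whenever the naive nearest multiple exits $\mathcal T_{h'}$; the worst-case deviation then becomes at most $\alpha h_j'$ rather than $\alpha h_j'/2$, which only affects the constant and not the qualitative scaling in $\alpha$. A secondary small point is that $\ell_j = 0$ is always a valid choice when $h_j \in [1,\beta)$ so the construction never degenerates, and because $h_j < L$ we have $\ell_j \le \ell_{\max}$.

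Putting these pieces together, one explicit prescription is to pick $\beta := 1 + \eta$ and $\alpha := \eta$ for a sufficiently small $\eta = \eta(d,\gamma,\epsilon) > 0$, since all three contributions in the TVC bound are $O(\eta^2)$ and in the AHC bound are $O(\eta^{2\gamma_2})$. Choosing $\eta$ to make the total at most $\epsilon^2$ finishes the proof. The argument is essentially deterministic and geometric once Lemma \ref{lem:distbd} is in hand, so the quantitative work reduces to the elementary bounds above.
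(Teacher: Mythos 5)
Your proposal is correct and follows essentially the same route as the paper: reduce to Lemma \ref{lem:distbd}, round $h$ down to the nearest power of $\beta$ and $t$ to the nearest grid point in $\mathcal T_{\alpha,h'}$, and observe that every resulting term vanishes as $\alpha \to 0$, $\beta \to 1$. You are in fact somewhat more careful than the paper, which simply asserts that a grid point within $\alpha h_j'$ exists in each coordinate, whereas you explicitly handle the boundary case by rounding toward the interior.
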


\begin{proof}
Throughout, let $\gamma$ mean either $\gamma_1, \gamma_2$ depending on context.  By the triangle inequality let us bound,
$$
\nu_f((t,h),(t',h')) \le \nu_f((t,h'),(t',h')) + \nu_f((t,h),(t,h')).
$$
In our construction, notice that for any $t$ there is a grid point in $\mathcal T_{\alpha,h'}$ that is within $\alpha h_j'$ from it in dimension $j$.
Hence,
$$
\sum_{j=1}^d \left| \frac{t_j-t_j'}{h_j'} \right|^{2 \gamma_2} \le d \alpha^{2\gamma_2}; \quad \sum_{j=1}^d \left| \frac{t_j-t_j'}{h_j'} \right|^{2} \le d \alpha^2.
$$
Furthermore, by the $\epsilon$-net construction, there exists an $h_j'$ such that $|\log h_j / h_j'| \le \log \beta$ for every $j$.
Thus,
$$
\left| \frac{h_j - h_j'}{\sqrt{h_j h_j'}} \right| \le \beta - 1; \quad \left| \sqrt{\frac{h_\bullet'}{h_\bullet}} - 1 \right| \le \beta^{\frac d2} - 1.
$$
Thus there is a constant, $C$, depending on $\gamma$, such that for $\epsilon$ small enough
$
\alpha = C \epsilon^{1/\gamma_2} 
$
and 
$
\beta = C (1 + \epsilon)^{2/d}
$
is sufficient.
% Each of the bounds in Lemma \ref{lem:distbd}, can be further bounded by 
% $$
% \nu_f((t,h),(t',h'))^2 \le C_\gamma \left(\left\| \frac{t-t'}{h}\right\|^2_2 + \left\| \frac{h - h'}{\sqrt{h h'}} \right\|_2^2 +  \right)
% $$
\end{proof}

Suppose that we are under the alternative hypothesis, $H_1$, so that there is some embedded signal $f$ in each image and that the noise is a standard Wiener process.
Consider evaluating the scan at the true location and scale, $(t^i,h^i)$ for a given field $X^i$, then
$(f_{h^i} \star {\rm d} X^i)(t^i)$ is normally distributed with mean $\mu$ and variance $1$.
In the event that we scan over an $\epsilon$-net, then by arguments in Section \ref{sec:method}, there is an element in the approximate scan with mean $\mu(1 - \epsilon^2 /2)$.
Hence, we have the following type 1 error bound, by summing the resulting $n$ normal random variables.

\begin{proposition}
Suppose that $\{X^i\}_{i=1}^n$ are drawn from $H_1$ (with possibly different location and scale parameters) where the noise random field, $W^i$, is a standard Wiener process in $d$ dimensions.
Then define $V_n = \sum_i v_{h^i}^2$ and $M_n = \sum_i v_{h^i}$,
$$
\mathbb P \left\{ E_{n}(X;\mathcal F) - \frac{1}{\sqrt n} \left( \mu \cdot \left(1 - \frac{\epsilon^2}{2}\right) M_n - V_n \right) < u \sqrt{\frac{V_n}{n}} \right\} \le \Phi(u)
$$
where $\Phi$ is the standard Normal CDF.  
The above display is also true if we let $\epsilon = 0$ and substitute $E_{n} \gets S_n$.
\end{proposition}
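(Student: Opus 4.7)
The plan is to lower-bound $E_n(X;\mathcal{F})$ by evaluating the maximum at a convenient pattern, location, and scale; since the outer maximum is over all $f \in \mathcal F$ and the inner one over $(t,h) \in \mathcal D_{\beta,\alpha}$, any admissible choice yields a valid lower bound. First I would restrict to the true embedded pattern $f$. Then, for each $i$, Theorem \ref{thm:epsnet} furnishes an approximating point $(t'^i,h'^i) \in \mathcal D_{\beta,\alpha}$ with $\nu_f((t^i,h^i),(t'^i,h'^i)) \le \epsilon$. For the $S_n$ case with $\epsilon = 0$, I would simply take $(t'^i,h'^i)=(t^i,h^i)$, which is now admissible because the continuous $\mathcal D$ contains it.

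Next I would decompose $(f_{h'^i} \star \mathrm{d}X^i)(t'^i)$ under $H_1$ into mean and noise. The mean equals $\mu \langle S_{t'^i} f_{h'^i}, S_{t^i} f_{h^i}\rangle_{L_2}$; since both factors are unit vectors in $L_2$ the polarization identity $\langle a,b\rangle = 1 - \tfrac12 \|a-b\|_{L_2}^2$ combined with the definition of $\nu_f$ gives
\begin{equation*}
\langle S_{t'^i} f_{h'^i}, S_{t^i} f_{h^i}\rangle_{L_2} \ge 1 - \tfrac{\epsilon^2}{2}.
\end{equation*}
The noise contribution $(f_{h'^i}\star \mathrm{d}W^i)(t'^i)$ is centered Gaussian of variance $\|f_{h'^i}\|_{L_2}^2 = 1$ because $W^i$ is the standard $d$-dimensional Wiener process. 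Multiplying by $v_{h^i}$ (see caveat below), applying the scale correction $-v_{h^i}^2$, and summing over the $n$ independent samples, independence folds the noise terms into a single $N(0, V_n)$ variable, yielding
\begin{equation*}
E_n(X;\mathcal F) \ge \frac{1}{\sqrt{n}}\Bigl(\mu\bigl(1 - \tfrac{\epsilon^2}{2}\bigr)M_n - V_n\Bigr) + \sqrt{\frac{V_n}{n}}\, Z, \qquad Z \sim N(0,1).
\end{equation*}
The claimed probability bound then follows immediately from $\mathbb P\{Z < u\} = \Phi(u)$.

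The main obstacle is purely bookkeeping: the natural scale correction at the chosen net point is $v_{h'^i}$, whereas $M_n$ and $V_n$ are defined with the true $v_{h^i}$. Since $h'^i_j/h^i_j \in [\beta^{-1},\beta]$ by the net construction, we have $v_{h'^i}^2 = v_{h^i}^2 - 2\sum_j \log(h'^i_j/h^i_j) = v_{h^i}^2 + O(d\log\beta)$, so $v_{h'^i} = v_{h^i} + O(1/v_{h^i})$. This lower-order discrepancy can be absorbed either by a mild redefinition of $M_n, V_n$ using the nearest net scales or into the margin $u\sqrt{V_n/n}$ for $L$ large; with this convention established, everything else reduces to the one-line Gaussian tail computation above.
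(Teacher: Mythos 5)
Your proposal is correct and follows essentially the same route as the paper, which likewise lower-bounds the statistic by evaluating at the true pattern and the net point approximating $(t^i,h^i)$, uses $\langle S_{t'}f_{h'},S_{t^i}f_{h^i}\rangle_{L_2}\ge 1-\epsilon^2/2$, and sums the resulting independent Gaussians. You are in fact more careful than the paper on one point: the discrepancy between the net-point scale correction $v_{h'^i}$ and the $v_{h^i}$ appearing in $M_n, V_n$, which the paper silently ignores and which you correctly identify as a lower-order term to be absorbed.
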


In order to have diminishing type 1 error probability, we set a threshold for $S_n$ at $F_n(\delta) \cdot \log \log L$ for $\delta \rightarrow 0$.
Assume that $\epsilon \rightarrow 0$ (however slowly) then to have the type 2 error probability to decrease as well, we require that
\[
\frac{\mu M_n - V_n}{\sqrt n} - F_n(\delta) \log \log L = \omega\left( \sqrt{\frac{V_n}{n}} \right).
\]
For comparison purposes, let's derive other conditions for the PAMSS to be asymptotically powerful (diminishing type 1 and 2 error).
Suppose that $n = 1$ and $|\mathcal F| = 1$ (the standard multiscale scan setting), then this would require
\[
\mu - v_{h^1} - \frac{K}{v_{h^1}} \log \frac 1 \delta \cdot \log \log L \rightarrow \infty,
\]
which is consistent up to constant $K$ and lower order additive terms with previously known rates, \cite{proksch2016multiscale}.
Critically, the dependence on $\delta$ is logarithmic, and this provides the first such exponential finite sample bound (\cite{dumbgen2001multiscale} provided only an almost sure bound; \cite{proksch2016multiscale} gave a polynomial bound).

We can see that if $|\mathcal F| = 1$ and $n \rightarrow \infty$ then $F_n(\delta) = O(1)$; if in addition, $h_j^i \le L^c$ for some $0 \le c < 1$, then $V_n / n = \Omega(\log L)$ and $\log \log L$ is lower order.
Then if these conditions hold, then a condition for the PAMSS to be asymptotically powerful is
$
\mu - V_n/M_n\rightarrow \infty.
$
We find that we can make a similar conclusion even if $|\mathcal F|$ grows in $n$.

\begin{corollary}
Suppose that $\log |\mathcal F| = o(n)$, and recall that under the alternative hypothesis, $H_1$, $X^i$ has an embedded pattern $f$ at scale $h^i$ and $v_{h^i}^2 = \sum_j \log(L/h_j^i)$, and the noise is a standard Wiener process.
Suppose also that $h^i_j \le L^{c}$ for some $0 \le c < 1$ for all $i,j$, then the PAMSS is asymptotically powerful (has diminishing probability of type 1 and type 2 error) if
%\begin{equation}
%\mu - \sqrt 2 \cdot \frac{\sum_{i=1}^n \sum_{j=1}^d \log \frac{L}{h_j^i}}{\sum_{i=1}^n \sqrt{ \sum_{j=1}^d \log \frac{L}{h_j^i}}} \rightarrow \infty.
%\end{equation}
\begin{equation}
\mu - \sqrt 2 \cdot \frac{\sum_{i=1}^n v_{h^i}^2}{\sum_{i=1}^n v_{h^i}} \rightarrow \infty.
\end{equation}
We take this result to mean that as long as the function class, $|\mathcal F|$, does not grow exponentially in $n$, we achieve asymptotic power under the same conditions as if $|\mathcal F| = 1$.
\end{corollary}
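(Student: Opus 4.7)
The plan is to combine the type 1 bound of Theorem \ref{thm:main} with the type 2 bound of the Proposition immediately preceding, being careful about the factor-of-$\sqrt 2$ mismatch between the corollary's redefinition $v_{h^i}^2 = \sum_j \log(L/h_j^i)$ and the earlier $v_h = \sqrt{2\sum_j \log(L/h_j)}$. Write $\bar V_n := 2V_n = \sum_i 2 v_{h^i}^2$ and $\bar M_n := \sqrt 2\, M_n = \sum_i \sqrt 2\, v_{h^i}$ for the sums appearing in the Proposition (those use the earlier $v_h$), so that $\bar V_n/\bar M_n = \sqrt 2\, V_n/M_n$ is exactly the ratio in the corollary's hypothesis.

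Fix a sequence $\delta_n \to 0$ slowly (say $\delta_n = 1/\log n$) and set the threshold $\tau_n := F_n(\delta_n)\log\log L$. Since $\log|\mathcal F| = o(n)$, the first (subGaussian) branch of $F_n$ applies eventually, so $F_n(\delta_n) = \sqrt{K\log(|\mathcal F|/\delta_n)} = O(\sqrt{\log|\mathcal F|} + \sqrt{\log\log n}) = o(\sqrt n)$, and Theorem \ref{thm:main} bounds the type 1 error by $\delta_n \to 0$. Applying the Proposition with $\epsilon = 0$ and $E_n \gets S_n$ yields
\[
\mathbb P\{S_n < \tau_n\} \le \Phi(u_n), \qquad u_n := \frac{\tau_n - (\mu \bar M_n - \bar V_n)/\sqrt n}{\sqrt{\bar V_n/n}},
\]
so the task reduces to showing $u_n \to -\infty$, which, after multiplying through by $\sqrt n/\bar M_n$, is equivalent to
\[
\mu - \frac{\bar V_n}{\bar M_n} - \frac{\tau_n \sqrt n}{\bar M_n} \;=\; \omega\!\left(\frac{\sqrt{\bar V_n}}{\bar M_n}\right).
\]

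The scale constraint $h^i_j \le L^c$ forces $v_{h^i}^2 \in [d(1-c)\log L,\, d\log L]$, whence $V_n = \Theta(n\log L)$, $M_n = \Theta(n\sqrt{\log L})$, and $\sqrt{\bar V_n}/\bar M_n = \Theta(1/\sqrt n)$. The threshold overhead is
\[
\frac{\tau_n \sqrt n}{\bar M_n} \;=\; O\!\left(\frac{F_n(\delta_n)\log\log L}{\sqrt{n\log L}}\right) \;=\; o(1),
\]
using $F_n(\delta_n) = o(\sqrt n)$ together with $(\log\log L)^2 = o(\log L)$ as $L \to \infty$. The hypothesis $\mu - \sqrt 2\, V_n/M_n = \mu - \bar V_n/\bar M_n \to \infty$ then forces the left-hand side of the displayed equivalence to diverge to $+\infty$, dominating the $\Theta(1/\sqrt n)$ right-hand side, so $u_n \to -\infty$ and $\Phi(u_n) \to 0$. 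The main obstacle is purely bookkeeping: aligning the two normalizations of $v_h$, confirming that $\log|\mathcal F| = o(n)$ puts us in the subGaussian branch of $F_n$, and checking that the $\log\log L$ overhead inherited from Theorem \ref{thm:main} is genuinely absorbed by the natural fluctuation scale $\sqrt{\bar V_n/n} = \Theta(\sqrt{\log L})$.
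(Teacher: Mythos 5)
Your proposal is correct and follows essentially the same route as the paper's own proof: it plugs the scale constraint into the power condition derived just before the corollary to get $V_n=\Theta(n\log L)$ and $\sqrt{V_n}/M_n\to 0$, and uses $\log|\mathcal F|=o(n)$ to force the subGaussian branch so that $F_n(\delta_n)\log\log L$ is lower order against $M_n/\sqrt n$. Your treatment is somewhat more explicit than the paper's (the choice of $\delta_n$, and the $\sqrt 2$ normalization reconciling the corollary's $v_{h^i}$ with the Proposition's), but the underlying argument is the same.
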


\begin{proof}
We can see this because, under the assumptions, $V_n \ge C n \log L$ for some $C > 0$ and $\sqrt{V_n} / M_n \rightarrow 0$.
Furthermore, $F_n(\delta) = o(\sqrt{n})$ and so $M_n / \sqrt n = \omega(F_n(\delta) \log \log L)$ making the term involving $|\mathcal F|$ lower order.
Evaluating $M_n,V_n$ gives us the result.
\end{proof}
% Then the 
% $$
% \frac{\mu}{2} - \frac{V_n}{W_n} = \omega\left( \frac{\sqrt{V_n}}{W_n} \cdot F_n(\delta) \cdot \log \log L \right)
% $$
% $$
% \frac{\mu}{2} - \sqrt{2 \log n} = \omega \left(\sqrt{\frac{\log |\mathcal F|}{n}} \cdot \log \log L \right).
% $$

Let us conclude with a remark about the restrictiveness of the assumption that we have a finite function class $\mathcal F$.
It is known that functions of bounded variation have Haar wavelet coefficients that are bounded in a weak $\ell_1$ norm, \cite{cohen2003harmonic}.
It is reasonable to discretize the allowed coefficient values and then restrict our function class to functions with $k$-sparse wavelet coefficients of $m$ then the log-size of the class scales like $k \log m$ which is very manageable.
One advantage with this approach is that the sparse Haar wavelets will naturally satisfy condition \eqref{eq:TVC}.
It is outside of the scope of this work to extend the result to infinite function classes, but this would present a very interesting and important extension.

\section{Conclusions}

We have addressed learning and detecting patterns from a function class, $\mathcal F$, using multiscale scan statistics.
We have introduced the multiscale scan statistic and proved a subexponential concentration bound for it, which relied on a novel chaining result for standardized suprema of subGaussian random fields (a result that may be of independent interest).
We introduced the pattern adapted multiscale scan statistic, that can learn patterns in a database of tensors (when the locations and scales vary).
This result allowed us to prove Bernstein-type concentration for the PAMSS, meaning that we can learn finite function classes that grow exponentially with the sample size, $n$.
With evidence that representation learning and detection are not incompatible, we anticipate that efficient methods for learning functions in this setting will emerge, by using modern tools from deep learning and multiscale methods.

% $$
% \frac{\mu}{2} \frac{W_n}{\sqrt{V_n}} - \sqrt{V_n} \rightarrow \infty
% $$

% Consider evaluating the scan at the true location and scale, $(t^i,h^i)$ for a given field $X^i$, then
% $(f_{h^i} \star {\rm d} X^i)(t^i)$ is normally distributed with mean $\mu$ and variance $1$.

% $$
% \frac{\mu}{2} - \frac{V_n}{W_n} = \omega\left( \frac{\sqrt{V_n}}{W_n} \cdot F_n(\delta) \cdot \log \log L \right)
% $$
% $$
% \frac{\mu}{2} - \sqrt{2 \log n} = \omega \left(\sqrt{\frac{\log |\mathcal F|}{n}} \cdot \log \log L \right).
% $$

% Acknowledgments---Will not appear in anonymized version

\noindent
{\bf Acknowledgements:} JS is supported in part by NSF DMS-1712996.

\bibliography{detectnets}

\appendix

\section{Proof of Theorem \ref{thm:chaining}}

\begin{proof}
We will follow the construction of generic chains as in \cite{talagrand2006generic}, but will be significantly more careful about the details of the construction.
This proof is similar in spirit to \cite{dumbgen2001multiscale}, but we will get probabilistic bounds and prefer this proof because it uses only first principles.
Throughout this proof we will call variables that dependent only on $\rho$, constants, and some like $C$ may change from line to line.
Variables from the main body of the paper, other than those defined in Theorem \ref{thm:chaining}, may appear in this proof and mean something different (we suppose that they are in a different scope).

Let's begin by defining $G = \Gamma^{1/\rho}$ and
\begin{equation*}
    a = (1 - \log^{-1} G)^{-1} > 1.
\end{equation*}
Let an admissible partition, $\mathcal A_k$, be any partition of $\mathcal I$ of size at most $a^{a^k}$.
Let $A_k(\iota)$ be the element of the partition containing $\iota$ and let the center of this element be 
\begin{equation*}
\tau_n(\iota) := \inf_{\tau \in A_n(\iota)} \sup_{\iota' \in A_n(t)} d_Z(\tau, \iota'),  
\end{equation*}
and $\Delta(A_k(t)) = \sup_{\iota \in A_k(t)} d_Z(\tau_k(\iota),\iota)$ be its radius.
Let $k_0 = 1,\ldots,\infty$ then $Z(\iota) \le Z(\tau_{k_0}(\iota)) + \sum_{k=k_0}^\infty |Z(\tau_{k+1}(\iota)) - Z(\tau_{k}(\iota))|$.
By the union bound and \eqref{eq:subGaussian2},
\begin{align*}
    \mathbb P \left\{\exists \iota \in \mathcal I : Z(\tau_{k_0}(\iota)) > \sqrt{2 \log a \cdot a^{k_0} + u^2} \right\} &\le a^{a^{k_0}} \exp \left(-\frac 12 \left(2 \log a \cdot a^{k_0} +  u^2 \right)\right) \\
    &= e^{-\frac{u^2}{2}}.
\end{align*}
Let 
\begin{equation}
\label{eq:epsilon}
    \epsilon_{k} = G \cdot a^{- \frac{a^k}{\rho}},
\end{equation} 
then there exists an admissible partition where the radius of the balls are $\epsilon_k$ satisfy $\mathcal N(\mathcal I, d_Z, \epsilon_k) \le a^{a^k}$.
Hence,
\begin{align*}
    &\mathbb P \left\{\exists \iota \in \mathcal I : |Z(\tau_{k + 1}(\iota)) - Z(\tau_k(\iota))| > \epsilon_k \sqrt{2 \log a \cdot a^{k+1} + d_k u^2} \right\} \\
    &\quad \le 2 a^{a^{k+1}}\exp \left(-\frac 12 \left(2 \log a \cdot a^{k+1} +  d_k u^2 \right)\right) \\
    &\quad = 2 e^{-\frac{d_k u^2}{2}},
\end{align*}
where
\begin{equation}
\label{eq:ddef}
    d_k = e^{\frac 12 \left( 1 + \log a \right) \cdot (k-k_0)}.
\end{equation}
Define the quantities, 
\begin{align*}
    A_{k_0} &= \sqrt{2 \log a \cdot a^{k_0} + u^2} + \sum_{k=k_0}^{\infty} \epsilon_k \sqrt{2 \log a \cdot a^{k+1} + d_k u^2},\\
    B_{k_0} &= \exp \left( - \frac{u^2}{2} \right) + \sum_{k=k_0}^\infty \exp \left(- d_k \frac{u^2}{2} \right),
\end{align*}
so that 
\begin{equation}
\label{eq:Aprobbd}
    \mathbb P \{ \sup_{\iota \in \mathcal I} Z(\iota) > A_{k_0} \} \le 2 B_{k_0}.
\end{equation}
Because $\sqrt{b_1 + b_2} \le \sqrt{b_1} + b_2 / (2 \sqrt{b_1})$ for $b_1,b_2 > 0$ we have 
\begin{equation}
\label{eq:Abound}
    A_{k_0} \le a_0 a^{k_0/2} + \frac{a^{-k_0/2}}{2 a_0} u^2 + \sum_{k=k_0}^\infty \left(a_0 \epsilon_k a^{\frac{k+1}{2}} + \frac{a^{- \frac{k+1}{2}}}{2 a_0} \epsilon_k d_k u^2 \right)
\end{equation}
for $a_0 = \sqrt{2 \log a}$.
Let $k_0$ satisfy
\begin{equation}
\label{eq:k0ineq}
    a^{k_0} \le \rho (\log_a G + k_0) \le a^{k_0 + 1},
\end{equation}
such that
$$
k_0 \log a \le C' \log \log G
$$
for some constant $C'$ possibly depending on $\rho$,
which is guaranteed by Lemma \ref{lem:k0exists} for $\Gamma$ large enough.
Then we have that 
\begin{equation}
\label{eq:Aterm1}
    a_0 a^{\frac{k_0}{2}} \le \sqrt{2 \log \Gamma + 2 \rho k_0 \log a} \le \sqrt{2 \log \Gamma} + \frac{\rho k_0 \log a}{\sqrt{2 \log \Gamma}} \le \sqrt{2 \log \Gamma} + \frac{C' \log \log \Gamma}{\sqrt{2 \log \Gamma}}
    %\sqrt{2 \log \Gamma + 2 \rho \frac{k_0}{\log \Gamma}} \le \sqrt{2 \log \Gamma} + \frac{k_0 \rho}{\sqrt 2 (\log \Gamma)^{3/2} },
\end{equation}
Furthermore, by \eqref{eq:epsilon} and \eqref{eq:k0ineq}
\begin{align*}
    \sum_{k=k_0}^\infty a^{\frac{k+1}{2}} \epsilon_k &\le G \sum_{k=k_0}^{\infty} a^{-\frac{a^k}{\rho} + \frac{k+1}{2}} = G a^{\frac{k_0 + 1}{2}} \sum_{j=0}^{\infty} a^{-a^{k_0} \frac{a^j}{\rho} + \frac{j}{2}}\\
    &\le G a^{\frac{k_0 + 1}{2}} \sum_{j=0}^\infty a^{-(\log_a G + k_0) a^{j-1} + \frac j2} = a^{\frac{1 - k_0}{2}} \sum_{j=0}^\infty a^{(\log_a G + k_0) (1 - a^{j-1}) + \frac j2}\\
    &= a^{\frac{1 - k_0}{2}} \left( \left(G a^{k_0}\right)^{1 - a^{-1}} + \sqrt a \sum_{j=0}^\infty a^{(\log_a G + k_0) (1 - a^{j}) + \frac j2} \right).
\end{align*}
The first term is
\begin{equation*}
    a^{\frac{1 - k_0}{2}} \left( G a^{k_0} \right)^{1 - a^{-1}} = e \cdot a^{\frac{1 - k_0}{2}} a^{\frac{k_0}{\log G}}.
\end{equation*}
Furthermore, for any $C < 1$ there exists a $G_0 > 0$ such that $\log a \ge C / \log G$ for all $G \ge G_0$, thus
\begin{equation*}
    a^{\frac{k_0}{\log G}} \le \left( \rho (\log_a G + k_0) \right)^{\frac{1}{\log G}} \le \left( \rho (C \log^2 G + k_0) \right)^{\frac{1}{\log G}} \rightarrow 1,
\end{equation*}
as $G \rightarrow \infty$.
Considering the second term,
\begin{equation*}
    \sum_{j=0}^\infty a^{(\log_a G + k_0)(1 - a^j) + \frac j2} \le 1 + \sum_{j=1}^\infty a^{(\log_a G)(1 - a^j) + \frac j2}.
\end{equation*}
Using the first order approximation,
\begin{equation*}
    a^j - 1 = \left( 1 - \frac{1}{\log G} \right)^{-j} - 1 \ge \frac{j}{\log G}.
\end{equation*}
So we may bound the series,
\begin{equation*}
\sum_{j=0}^\infty a^{(\log_a G) (1 - a^j) + \frac j2} \le \sum_{j=0}^\infty \exp\left( \left(\frac{\log a}{2} - 1 \right) j \right) \le \left(1 - \exp\left( \frac{\log a}{2} - 1 \right) \right)^{-1}\rightarrow \frac{1}{1 - e^{-1}},
\end{equation*}
as $G \rightarrow \infty$.
Hence, there are constants $C, G_0 > 0$ depending on $\rho$ such that for $G > G_0$, 
\begin{equation}
\label{eq:Aterm2}
    \sum_{k=k_0}^\infty a_0 \epsilon_k a^{\frac{k+1}{2}} \le C a_0 a^{-\frac{k_0}{2}} \le C \sqrt{\frac{2 a \log a}{\rho (\log_a G + k_0)}} \le \frac{C \sqrt{2 a} \log a}{\sqrt{2 \rho \log G}}.
\end{equation}
Also, 
\begin{equation}
\label{eq:Aterm3}
    \frac{a^{-\frac{k_0}{2}}}{a_0} \le \frac{C}{\sqrt{2 \rho \log G}}.
\end{equation}
Consider the fourth term in $A_{k_0}$,
\begin{align*}
    &\sum_{k=k_0}^\infty a^{-\frac{k+1}{2}} \epsilon_k d_k = G \sum_{k=k_0}^\infty a^{-\frac{k+1}{2}} a^{-\frac{a^k}{\rho}} e^{\frac 12 (1 + \log a) \cdot (k-k_0)} \\ 
    & \le G a^{-\frac 12 (k_0 + 1)} \sum_{j=0}^\infty a^{-\frac{j}{2}} a^{-(\log_a G + k_0) a^{j-1}} e^{\frac 12 (1 + \log a) \cdot j} \\
    & \le a^{- \frac 32 k_0 - \frac 12} \sum_{j=0}^\infty a^{-\frac{j}{2}} a^{(\log_a G + k_0) \cdot (1 - a^{j-1})} e^{\frac 12 (1 + \log a) \cdot j}
\end{align*}
Isolating the summation,
\begin{align*}
    \sum_{j=1}^\infty a^{(\log_a G + k_0) \cdot (1 - a^{j-1})} e^{\frac j2} &\le \sqrt e \sum_{j=0}^\infty \exp\left( \left( \frac 12 - \frac{\log G + k_0 \log a}{\log G} \right) \cdot j \right)\\
    & \le \frac{\sqrt e}{1 - 1/\sqrt e} \le 4.2. \\
\end{align*}
Furthermore,
\begin{equation*}
    a^{(\log_a G + k_0) \cdot (1 - a^{-1})} \le e^{-\left( 1 + \frac{k_0}{\log_a G} \right) } \le \frac 1e < 0.4.
\end{equation*}
Hence, there exists a $G_0 > 0$ such that 
\begin{equation}
\label{eq:Aterm4}
    \frac{u^2}{a_0} \sum_{k=k_0}^\infty a^{-\frac{k+1}{2}} \epsilon_k d_k \le u^2 \frac{4.6}{\sqrt{2 \log a}} \cdot a^{-\frac 32 k_0 - \frac 12} \le \frac{u^2}{\sqrt{\rho \log G}},
\end{equation}
for $G \ge G_0$.
Hence, by \eqref{eq:Abound} combined with \eqref{eq:Aterm1}, \eqref{eq:Aterm2}, \eqref{eq:Aterm3}, and \eqref{eq:Aterm4} yields
\begin{equation*}
    A_{k_0} \le \sqrt{2 \log \Gamma} + \frac{a_0 \log \log \Gamma + u^2/2}{\sqrt{c_0 \log \Gamma}},
\end{equation*}
for constants $C_0, C_1, G_0$ with $G \ge G_0$.
Consider $B_{k_0}$,
\begin{align}
    &\exp \left( \frac{u^2}{2} \right) \cdot \sum_{k=k_0}^\infty \exp \left( - d_k \frac{u^2}{2} \right) = \sum_{j=0}^\infty \exp \left( \left(1 - \exp \left( \frac 12 (1 + \log a)\cdot j \right) \right) \cdot \frac{u^2}{2} \right) \\
    &\quad \le \sqrt e \sum_{j=0}^\infty \exp \left( - \frac 12 \exp\left( \frac 12 j \right) \right) = \sqrt e \sum_{j=0}^\infty b^{-b^j} \le 2.4, \\
\end{align}
where $b = \sqrt e$.
Hence,
\begin{equation*}
    B_{k_0} \le C \cdot e^{-\frac 12 u^2}.
\end{equation*}
Recalling \eqref{eq:Aprobbd} gives us our result.
\end{proof}

\begin{lemma}
\label{lem:k0exists}
Let $\rho > 0$, and define the function 
\begin{equation}
    a(G) = \left( 1 - \frac{1}{\log G} \right)^{-1}.
\end{equation}
There exists a constant $G_0$ depending on $\rho$ alone such that if $G > G_0$ then there exists an integer $k_0$ satisfying
\begin{equation}
    a^{k_0} \le \rho (\log_a G + k_0) \le a^{k_0 + 1},
\end{equation}
and there is a constant $C$ depending on $\rho$ such that $k_0 \log a \le C \log \log G$.
\end{lemma}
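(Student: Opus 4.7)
My plan is to define $k_0$ as the largest nonnegative integer satisfying the defining inequality $a^k \le \rho(\log_a G + k)$, and then verify both bounds and the $\log \log G$ estimate. Non-emptiness of the defining set follows by taking $k=0$: the inequality reduces to $1 \le \rho \log_a G$, which holds once $G$ is large, since $\log_a G = \log G / \log a \to \infty$. Boundedness above follows because $a = (1 - 1/\log G)^{-1} > 1$, so $a^k$ grows exponentially in $k$ while $\rho(\log_a G + k)$ is linear in $k$. Hence a maximum $k_0$ exists, and by construction $a^{k_0} \le \rho(\log_a G + k_0)$. By maximality, $k_0 + 1$ violates the defining inequality, so $a^{k_0+1} > \rho(\log_a G + k_0 + 1) \ge \rho(\log_a G + k_0)$, which gives the upper bound $\rho(\log_a G + k_0) \le a^{k_0+1}$.

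For the size estimate, the key ingredient is the elementary inequality $\log a = -\log(1 - 1/\log G) \ge 1/\log G$ (since $-\log(1-x) \ge x$ for $x \in [0,1)$), which implies $\log_a G = \log G / \log a \le \log^2 G$. Taking logarithms in the defining inequality gives
\[
k_0 \log a \le \log \rho + \log(\log_a G + k_0).
\]
I will split into two cases. If $k_0 \le \log_a G$, then $\log(\log_a G + k_0) \le \log 2 + \log \log_a G \le \log 2 + 2 \log \log G$, yielding $k_0 \log a \le 2 \log \log G + O(1)$ directly. If instead $k_0 > \log_a G$, then $\log(\log_a G + k_0) \le \log(2 k_0)$, so $a^{k_0} \le 2 \rho k_0$, which forces $k_0 \log a \le \log(2\rho k_0)$; combined with $-\log \log a \le \log \log G$, this self-referential inequality gives $k_0 \log a = O(\log \log G)$. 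But $k_0 > \log_a G$ also forces $k_0 \log a > \log G$, contradicting $k_0 \log a = O(\log \log G)$ for $G$ large. Thus for $G \ge G_0$ only the first case is possible, and the claimed bound $k_0 \log a \le C \log \log G$ holds.

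The main obstacle I anticipate is handling the delicate fact that $a \to 1$ as $G \to \infty$: naive estimates that treat $\log a$ as a positive constant fail. The inequality $\log a \ge 1/\log G$ is exactly the quantitative control needed to convert the defining condition into a usable bound, and the two-case analysis ensures we capture the correct regime in which $k_0$ must live. Everything else is routine manipulation of logarithms, and the argument does not require any properties of the random field or the generic chaining construction.
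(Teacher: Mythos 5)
Your proof is correct, and it locates the same $k_0$ as the paper --- the integer sitting just below the point where $a^{\kappa}$ overtakes $\rho(\log_a G + \kappa)$ --- but the verification is organized differently and is cleaner in two places. The paper introduces $f(\kappa) = a^{\kappa} - \rho(\log_a G + \kappa)$, locates its minimizer $\kappa_0$ and upper root $\kappa_1$, proves $\kappa_1 - 1 \ge \kappa_0$ so that $f$ is increasing on $[k_0, k_0+1]$, and sets $k_0 = \lfloor \kappa_1 \rfloor$; your maximality argument (largest integer in the sublevel set, which is nonempty at $k=0$ and bounded above since $a>1$) delivers both inequalities immediately and dispenses with the convexity bookkeeping entirely. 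For the size estimate, your two-case argument is sound, and notably you have the key inequality in the correct direction: $\log a = -\log(1 - 1/\log G) \ge 1/\log G$, hence $\log_a G \le \log^2 G$, whereas the paper's proof asserts $\log a \le 1/\log G$ and $\log_a G \ge \log^2 G$ (true only up to a constant factor, and invoked where a lower bound on $\log_a G$ is needed), and its closing chain $\log(a^{2k_0}) \le \log\left(a^{k_0} - \rho k_0\right)$ cannot hold as written since $a^{2k_0} > a^{k_0}$. Your derivation $k_0 \log a \le \log\rho + \log(\log_a G + k_0)$, followed by the case split --- with the observation that the regime $k_0 > \log_a G$ is vacuous for large $G$ because it would force $k_0 \log a > \log G$ while the self-referential bound caps $k_0\log a$ at $O(\log\log G)$ --- is a correct and more careful replacement for that step.
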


\begin{proof}
Set $G > 0$ and $a = a(G)$.
Let $\kappa_1$ be the root of the following function,
\begin{equation*}
    f(\kappa) := a^{\kappa} - \rho ( \log_a G + \kappa ).
\end{equation*}
Notice that $f$ is strictly convex and so let $\kappa_0$ be its unique minimizer.
\begin{equation*}
    \kappa_1 = \log_a \left( \rho (\log_a G + \kappa_1 ) \right) \ge \log_a \left( \rho \log_a G \right),
\end{equation*}
Notice that $\log a \le 1/\log G$, hence,
\begin{equation*}
    \log_a G = \frac{\log G}{\log a} \ge \log^2 G,
\end{equation*}
and
\begin{equation*}
    \kappa_1 \ge \frac{\log \rho}{\log a} + 2 \frac{\log \log G}{\log a} \ge \log G \cdot (\log \rho + 2 \log \log G) =: x_1.
\end{equation*}
Moreover, $\kappa_0$ satisfies $\kappa_0 a^{\kappa_0} = \rho$,
\begin{equation*}
    (\kappa_1 - 1) a^{\kappa_1 - 1} =  \frac{\rho}{a} (\kappa_1 - 1)(\log_a G + \kappa_1) \ge  \frac{\rho}{a} (x_1 - 1) (\log^2 G + x_1).
\end{equation*}
The limit of the right hand side approaches $+\infty$ as $G$ increases.
Hence, there exists an $G_0 > 0$ depending on $\rho$ only such that for any $G \ge G_0$,
\begin{equation*}
    (\kappa_1 - 1) a^{\kappa_1 - 1} \ge \rho = \kappa_0 a^{\kappa_0}.
\end{equation*}
Because $\kappa a^\kappa$ is a monotone increasing function of $\kappa > 0$, this implies that $\kappa_1 - 1\ge \kappa_0$.

Let $k_0 = \lfloor \kappa_1 \rfloor$.
We know that $f$ is increasing and convex over $[\kappa_0,\infty)$.
Let $G \ge G_0$.
Because $\kappa_0 \le k_0 \le \kappa_1 \le k_0 + 1$ we have that $f(k_0) \le 0 \le f(k_0 + 1)$.
Hence,
\begin{equation*}
    a^{k_0} - \rho (\log_a G + k_0) \le 0 \le a^{k_0 + 1} - \rho (\log_a G + k_0 + 1) \le a^{k_0 + 1} - \rho (\log_a G + k_0),
\end{equation*}
which proves the first result.
Finally, 
$$
\log(a^{2 k_0}) \le \log\left( a^{k_0} - \rho k_0 \right) \le \log(\rho \log_a G)
$$
for some $G$ ($k_0$) large enough.
Notice that $\log_aG \le C (\log G)^2$ by the limit of $\log a \rightarrow 1 / \log G$, demonstrating the second result.
\end{proof}

\section{Remaining proofs}

{\bf A note on the derivation of Theorem \ref{thm:main}}.
Notice that $t^2 / nK < t/K$ when $t < n$, and that let
$$
t = \sqrt{\frac{nK}{c_3} \log \frac{|\mathcal F|}{\delta}}.
$$
Then
\begin{align*}
    |\mathcal F| \exp \left( - c_3 \frac{t^2}{nK^2} \right) \le \delta
\end{align*}
and $t < n$ when 
$$
\log |\mathcal F| \le \frac{n}{c_3 K} + \log \delta.
$$
In the other case, we define $t$ so that the probabilistic bound holds, and absorb $c_3$ into K.

\begin{proof}{\bf of Lemma \ref{lem:distbd}.}
(2) is proven in \cite{proksch2016multiscale} (pg. 32) so we will focus on (1).
We will partially follow the arguments in \cite{dumbgen2001multiscale} (pg. 145).
Let $W$ denote a standard Wiener process in $d$ dimensions.  Notice that we can define $\nu_f$ as,
$$
 \nu_f((t,h),(t',h'))^2 := \mathbb V \left( \int_{\Omega_L} f_h(t-z) {\rm d}W(z) - \int_{\Omega_L} f_{h'}(t'-z) {\rm d}W(z) \right).
$$

%Suppose that $\beta^{-1} \le |h_j / h_j'| \le \beta$ and $\beta < 1$,
Recall that
$$
(f_h \star {\rm d}W)(t) = \frac{1}{\sqrt{h_\bullet}}\int_{\Omega} f(u) {\rm d} W(t - h u).  
$$
By integration by parts,
$$
\frac{1}{\sqrt{h_\bullet}}\int_{\Omega} f(u) {\rm d} W(t - h u) = \int_\Omega \frac{W(t - h u)}{\sqrt{h_\bullet}} \cdot \nabla f(u) {\rm d} u.   
$$
Therefore,
\begin{align*}
&(f_h \star {\rm d}W)(t) - (f_{h'} \star {\rm d}W)(t') = \int_\Omega \left(\frac{W(t - h u)}{\sqrt{h_\bullet}} - \frac{W(t' - h' u)}{\sqrt{h_\bullet'}}\right) \cdot \nabla f(u) {\rm d} u\\
& \le \int_\Omega \left\|\frac{W(t - h u)}{\sqrt{h_\bullet}} - \frac{W(t' - h' u)}{\sqrt{h_\bullet'}}\right\|_2\cdot \| \nabla f(u) \|_2 {\rm d}u\\
& \le \sup_{u \in \Omega} \left\|\frac{W(t - h u)}{\sqrt{h_\bullet}} - \frac{W(t' - h' u)}{\sqrt{h_\bullet'}}\right\|_2 \cdot \int_\Omega \| \nabla f(u) \|_2 {\rm d}u.
\end{align*}
Hence,
$$
\nu_f((t,h),(t',h'))^2 \le \| f\|_{\rm TV}^2 \cdot \mathbb E \sup_{u \in \Omega} \left\|\frac{W(t - h u)}{\sqrt{h_\bullet}} - \frac{W(t' - h' u)}{\sqrt{h_\bullet'}}\right\|_2^2.
$$
Decompose the supremum term,
\begin{align*}
    & \mathbb E\sup_{u \in \Omega} \left\|\frac{W(t - h u)}{\sqrt{h_\bullet}} - \frac{W(t' - h' u)}{\sqrt{h_\bullet'}}\right\|_2 \\
    &\le \mathbb E \sup_{u \in \Omega} \left( \left\| \frac{W(t-hu) - W(t' - h' u)}{\sqrt{h_\bullet}} \right\|_2 + \|W(t'-h'u)\|_2 \left| \frac{1}{\sqrt{h_\bullet}} - \frac{1}{\sqrt{h_\bullet'}} \right| \right)
\end{align*}
By Brownian scaling,
$
W(t-hu) - W(t' - h' u))/\sqrt{h_\bullet} \overset{d}{=} W\left( \frac th - u \right) - W\left(\frac{t'}{h} - \frac{h'}{h} u\right).
$
Hence, the $j$th coordinate in the above LHS is a 1-dimensional Brownian motion equal in distribution to, 
$
B\left( \frac{t_j - t'_j}{h_j} - \left( 1 - \frac{h'_j}{h_j}\right) u_j \right).
%\le  |u_j|.
$
By the reflection principle, we have that for some constant $c'$,
$$
\mathbb E \sup_{u \in \Omega} B\left( \frac{t_j - t'_j}{h_j} - \left( 1 - \frac{h'_j}{h_j}\right) u_j \right)^2 \le c'\left( \left| \frac{t_j - t_j'}{h_j} \right|^2 + \left| \frac{h_j - h_j'}{h_j} \right|^2 \right).
$$
Hence,
$$
\mathbb E \sup_{u \in \Omega} \left\| \frac{W(t-hu) - W(t' - h' u)}{\sqrt{h_\bullet}} \right\|_2^2 \le c' \left(\sum_j \left| \frac{h_j - h_j'}{h_j} \right|^2 +  \left| \frac{t_j - t_j'}{h_j} \right|^2 \right).
$$
Without loss of generality, we can select $t = 0$ by translation invariance of $\nu_f$.
Furthermore by a similar argument, $\mathbb E \sup_u \|W(h' u)\|_2^2 \le c' h_\bullet'$, for another universal constant $c'$.
Hence, 
$$
\mathbb E \sup_{u \in \Omega} \|W(t'-h'u)\|_2 \left| \frac{1}{\sqrt{h_\bullet}} - \frac{1}{\sqrt{h_\bullet'}} \right| \le c' \left| \sqrt{\frac{h_\bullet'}{h_\bullet}} - 1 \right|.
$$
Combining these bounds completes the proof.
\end{proof}

\end{document}